  \tikzstyle{vertex}=[circle,fill=blue,inner sep=2pt]
  \tikzstyle{edge style}=[line width=.5pt]
  \tikzstyle{label style}=[circle,fill=white,inner sep=1pt,font=\tiny]
  \tikzstyle{squirestyle}=[scale=.8,rounded corners,minimum height=5cm,minimum width=7cm,draw]
  \tikzstyle{circlestyle}=[scale=.8,circle,minimum height=3cm, draw]
  \tikzstyle{ellipsestyle}=[scale=.8,ellipse,dash pattern=on 5pt off 5pt,minimum height=2.3cm,minimum width=4.5cm, draw]
 \newtheorem{theorem}{Theorem}[section]
 \newtheorem{lemma}[theorem]{Lemma}
 \newtheorem{proposition}[theorem]{Proposition}
 \theoremstyle{definition}
 \theoremstyle{remark}
 \numberwithin{equation}{section}
 \newcommand{\D}{\textsf{D}}
\newcommand{\Syl}{\textsf{Syl}}
\newcommand{\Aut}{\textsf{Aut}}
\newcommand{\Out}{\textsf{Out}}
\newcommand{\N}{\mathbf{N}}
\renewcommand{\varphi}{\phi}
\renewcommand{\leq}{\leqslant}
\renewcommand{\geq}{\geqslant}
\begin{document}

\title[Characterization of some $L_{3}(q)$]{A characterization of some projective special linear groups}

\author[A. Daneshkhah]{Ashraf Daneshkhah$^{\ast}$}
 \thanks{$^{\ast}$Corresponding author: Ashraf Daneshkhah}
 \address{%
 A. Daneshkhah, Department of Mathematics,
 Faculty of Science,
 Bu-Ali Sina University, Hamedan, Iran}
 \email{adanesh@basu.ac.ir and daneshkhah.ashraf@gmail.com (Gmail preferred)}

\author[Y. Jalilian]{Younes Jalilian}
 \address{
 Y. Jalilian, Department of Mathematics,
 Faculty of Science,
 Bu-Ali Sina University, Hamedan, Iran }
 \email{ y.jalilian91@basu.ac.ir }


\subjclass[2010]{Primary 20D05; Secondary 20D06}

\keywords{Projective special linear groups; Prime graph; Degree pattern.}

\begin{abstract}
 In this paper, we show that projective special linear groups $S:=L_3(q)$ with $q$ less than $100$ are uniquely determined by their orders and degree patterns of their prime graphs. Indeed, we prove that if $G$ is a finite group whose order and degree pattern of its prime graph is the same as the order and the degree pattern of $S$, then $G$ is isomorphic to $S$.
\end{abstract}
\date{February 9, 2016}
\maketitle
\section{Introduction}\label{s:intro}
Let $G$ be a finite group and $\pi(G)=\{p_1, p_2,\ldots, p_k\}$ be the set of primes dividing the order of $G$. The \emph{prime graph} $\Gamma(G)$ of  $G$ is a simple graph whose vertex set is $\pi(G)$ and two distinct primes $p$ and $q$ in $\pi(G)$ are adjacent if and only if there exists an element of order $pq$ in $G$. If $p$ is adjacent  to $q$, we write $p\sim q$. For $p\in\pi(G)$, the \emph{degree} of $p$ is the number $\deg_{G}(p):=| \{q\in\pi(G)\mid p\sim q\} |$. The \emph{degree pattern} $\D(G)$ of $G$ is the $k$-tuple $(\deg_{G}(p_1), \deg_{G}(p_2),\ldots,\deg_{G}(p_k))$ in which $p_1<p_2<\cdots <p_k$. A group $G$ is said to be \emph{OD-characterizable} if there exists exactly one isomorphic class of finite groups with the same order and degree pattern as $G$.

Darafsheh et al in \cite{Darafsheh} studied the quantitative structure of finite groups using their degree patterns and proved that if $|\pi((q^2+q+1)/d)|=1$, where $d=(3,q-1)$ and $q\geq 5$, then $L_3(q)$ is OD-characterizable. In \cite{Rezaeezadeh1}, it is shown that $L_3(25)$ is OD-characterizable. Also the authors in \cite{Rezaeezadeh} proved that the simple group $L_3(2^n)$ with $n\in\{4,5,6,7,8,10,12\}$ is OD-characterizable. Note in passing that all finite simple groups whose orders are less than $10^8$ are OD-characterizable, see \cite{Zhang1}. In this paper, we show that $L_{3}(q)$, where $q$ is a prime power less than $100$, is uniquely determined by its order and the degree pattern of its prime graph, that is to say,

\begin{theorem}\label{thm:main}
 Let $G$ be a finite group, and let $q$ be a prime power less than $100$. If $|G|=|L_3(q)|$ and $\D(G)=\D(L_3(q))$, then $G\cong L_3(q)$.
\end{theorem}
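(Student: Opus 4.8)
The plan is to follow the by-now standard strategy for $\mathrm{OD}$-characterizations, resting on the Gruenberg--Kegel theorem about groups with disconnected prime graph. Write $S=L_3(q)$, so that $|S|=q^3(q-1)^2(q+1)(q^2+q+1)/d$ with $d=\gcd(3,q-1)$, and set $\tau:=\pi((q^2+q+1)/d)$. A good part of the range is already in the literature: the case $|\pi((q^2+q+1)/d)|=1$ with $q\geq5$ by \cite{Darafsheh}, $L_3(25)$ by \cite{Rezaeezadeh1}, $L_3(2^n)$ for $n\in\{4,5,6\}$ by \cite{Rezaeezadeh}, and the smallest cases by \cite{Zhang1}; what remains is an explicit finite list of prime powers $q<100$, which I would treat uniformly. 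For each such $S$ I would first pin down $\Gamma(S)$ and $\D(S)$ precisely: since $(q^2+q+1)/d$ is coprime to $6q(q^2-1)$ and is a product of primes exceeding $3$, the Singer maximal torus of order $(q^2+q+1)/d$ shows that $\tau$ is a clique in $\Gamma(S)$, and the fact that the centraliser in $S$ of any nontrivial element of that torus is the torus itself shows that no prime of $\tau$ is joined to a prime outside $\tau$; thus $\tau$ is a connected component, and in fact $\Gamma(S)$ has exactly the two components $\tau$ and $\pi_1:=\pi(S)\setminus\tau$ (the latter containing $2$ and the defining characteristic $p$). In particular every prime of $\tau$ has degree $|\tau|-1$, which — together with a short analysis of the maximal tori and Levi subgroups of $S$ — determines $\D(S)$.

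Now let $G$ satisfy $|G|=|S|$ and $\D(G)=\D(S)$; then $\pi(G)=\pi(S)$ and $\deg_G(r)=\deg_S(r)$ for every prime $r$. The first step is to show $G$ is not solvable: for every $q$ in the remaining list one checks, via the Caro--Wei bound on the independence number applied to the known degree sequence, that $\Gamma(G)$ contains three pairwise non-adjacent primes, whereas a solvable group cannot, since a Hall subgroup for those three primes would then be a solvable group all of whose elements have prime-power order, and such a group involves at most two primes by Higman's classification. The next step is to produce a non-abelian simple section of $G$ whose prime graph carries $\tau$: one argues that $\Gamma(G)$ is itself disconnected — this is one of the genuinely delicate points, since equality of degree patterns does not by itself preserve connectivity, and it has to be extracted from a local analysis of the Sylow $r$-subgroups for $r\in\tau$ (each is cyclic, or of order $r^2$ in the two cases where $q^2+q+1$ is divisible by $r^2$, so the primes adjacent to $r$ in $\Gamma(G)$ are exactly the primes other than $r$ dividing the centraliser of a Sylow $r$-subgroup, and pushing this through all of $\tau$ forces $\tau$ to be a union of connected components of $\Gamma(G)$) — and then one applies the Gruenberg--Kegel theorem. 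That theorem supplies a normal series $1\trianglelefteq H\trianglelefteq K\trianglelefteq G$ with $K/H=:P$ non-abelian simple, $H$ and $G/K$ solvable $\pi_1$-groups, and every component of $\Gamma(G)$ other than $\pi_1$ a union of components of $\Gamma(P)$; in particular $\tau$ is carried by $\Gamma(P)$ and $|P|$ divides $|S|$. The Frobenius and $2$-Frobenius alternatives are eliminated by an easy congruence count: in either case one of the two relevant Hall subgroups would have order $(q^2+q+1)/d$, while $q^3(q-1)^2(q+1)\equiv3\pmod{q^2+q+1}$, which contradicts the congruence a Frobenius (or $2$-Frobenius) structure would impose between the orders of kernel and complement.

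It remains to identify $P$, and I expect this to be the main obstacle. One must determine all non-abelian simple groups $P$ with $|P|$ dividing $|S|$ for which $\tau$ is a union of connected components of $\Gamma(P)$; since $q<100$, $|S|$ is one of finitely many explicit integers, so this is a finite check through the alternating groups, the sporadic groups and the families of Lie type, with the large primes of $\tau$ and the exact prime-power factorisation of $|S|$ doing most of the cutting and \GAP handling the bookkeeping. The awkward cases are order coincidences — most famously $|L_3(4)|=|A_8|=|L_4(2)|$, and in general any $q$ for which several simple groups have order dividing $|S|$ — which the order alone cannot separate but the degree pattern can (for instance $\Gamma(L_3(4))$ has no edges whereas $\Gamma(A_8)$ does). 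Once $P\cong L_3(q)$ is forced, the equality $|P|=|S|=|G|$ makes $H=1$ and $K=G$, so $G\cong P\cong L_3(q)$, completing the proof. To sum up, the two places where real work is needed are (i) showing $\Gamma(G)$ is disconnected, extracted from non-solvability together with the small degrees the pattern forces on the primes of $\tau$, and (ii) the classification-based elimination of every simple group other than $L_3(q)$ as the section $P$.
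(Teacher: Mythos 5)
Your overall architecture (reduce to a finite list of $q$, get a simple section, identify it against the list of simple groups of order dividing $|L_3(q)|$, then kill the radical by order) is the right shape, but there is a genuine gap at the step you yourself flag as delicate: the claim that $\Gamma(G)$ is disconnected, i.e.\ that $\tau=\pi((q^2+q+1)/d)$ is a union of components of $\Gamma(G)$, so that the Gruenberg--Kegel/Williams theorem applies. This cannot be extracted from $|G|$ and $\D(G)$, and your proposed local argument does not supply it. Concretely, take $q=11$, so $\D(L_3(11))=(3,2,3,1,2,1)$ on the primes $2,3,5,7,11,19$ and $\tau=\{7,19\}$: the \emph{connected} graph with edges $2\sim7$, $2\sim3$, $2\sim5$, $3\sim11$, $5\sim11$, $5\sim19$ realizes exactly this degree pattern, so nothing in the hypotheses prevents $7$ from being adjacent to $2$ and $19$ to $5$ in $\Gamma(G)$, with $\tau$ not a component at all. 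Your route to disconnectedness via Sylow $r$-subgroups for $r\in\tau$ ("the primes adjacent to $r$ are exactly those dividing the centraliser of a Sylow $r$-subgroup, and pushing this through $\tau$ forces $\tau$ to be a union of components") presumes structural control of $\C_G(R)$ for $R\in\Syl_r(G)$ that is simply not available at this stage: $G$ is an arbitrary group with the prescribed order and degree pattern, an element of order $rs$ only centralises \emph{some} subgroup of order $r$ (not a full Sylow $r$-subgroup when $|G|_r=r^2$), and in any case an adjacency $r\sim s$ with $s\notin\tau$ is perfectly compatible with the degree pattern, as the example shows. Since the whole Gruenberg--Kegel machinery (normal series with simple section carrying the odd component, elimination of Frobenius and $2$-Frobenius configurations by your congruence $q^3(q-1)^2(q+1)\equiv3\pmod{q^2+q+1}$) is conditioned on this disconnectedness, the proof as proposed does not go through for the bulk of the remaining values of $q$.

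The paper sidesteps exactly this obstruction: instead of Gruenberg--Kegel it uses the Vasilev--Gorshkov criterion, which needs only $\alpha(G)\geq3$ and $\alpha(2,G)\geq2$ --- both readable off the degree pattern (two vertices of degree $1$ plus a count of small degrees, as in Lemma~\ref{lem:alph3}) --- to conclude that $G/K$ is almost simple over the solvable radical $K$, with no claim about connectivity of $\Gamma(G)$. The large prime of $\tau$ is then excluded from $\pi(K)$ by Hall-subgroup and Frattini arguments inside $K$ (any such adjacency would push its degree above the prescribed value), excluded from $\pi(\Out(S))$ by Lemma~\ref{out}, hence forced into $\pi(S)$, and Zavarnitsine's table of simple groups with narrow prime spectrum identifies $S\cong L_3(q)$ after eliminating a handful of small candidates. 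Only for $q=49$, where $\D(G)=(3,1,1,1,1,1)$ really does force $\Gamma(G)$ to be disconnected by pure graph counting, does the paper run the Williams-type analysis you propose. If you want to keep your Gruenberg--Kegel framework, you must either restrict it to the cases where disconnectedness is forced combinatorially (as for $q=49$), or replace step (i) by the independence-number/almost-simple argument; as written, step (i) is not just delicate but unprovable from the stated hypotheses.
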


In order to prove Theorem~\ref{thm:main}, by \cite{Darafsheh,Rezaeezadeh,Rezaeezadeh1}, we only need to show that $L_{3}(q)$ is OD-characterisable for $q\in\{11,23,29,37,47,49, 53,61,67,79,81,83\}$.

Throughout this article all groups are finite. The \emph{spectrum} $\omega(G)$ of a group $G$ is the set of orders of its elements, and  $\mu(G)$ is the set of elements of $\omega(G)$ that are maximal with respect to divisibility relation. Let $t(G)$ be the number of \emph{connected components} of $\Gamma(G)$, and set $\pi_{i}:=\pi_i(G)$, for $i=1,\ldots,t(G)$. When $|G|$ is even, we always assume that $2\in\pi_1$. For a positive integer $n$, the set of all primes dividing $n$ is denoted $\pi(n)$, and recall that  $\pi(G):=\pi(|G|)$. The \emph{$p$-part} of $n$ is denoted by $n_{p}$, that is to say, $n_{p}=p^{\alpha}$ if $p^{\alpha}\mid n$ but $p^{\alpha+1}\nmid n$. For $i=1,\ldots, t(G)$, a positive integer $m_{i}$ with $\pi(m_i)=\pi_i$ is called an \emph{order component} of $G$. Then $|G|$ can be expressed as a product of $m_1$, $m_2$, \ldots, $m_{t(G)}$. If $m_i$ is odd, then $m_{i}$ is called an \emph{odd order component} of $G$. All further definitions and notation are standard and can be found in \cite{ATLAS,Gorenstein}.

\section{Preliminaries}\label{sec:pre}

In this section, we mention some useful result to be used in proof of Theorem~\ref{thm:main}.

\begin{lemma}\cite[Theorem 1]{Higman}\label{lem:solv}
Let $G$ be a finite solvable group all of whose elements are of prime power order. Then $|\pi (G)|\leq 2$.
\end{lemma}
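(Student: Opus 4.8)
The plan is to proceed by induction on $|G|$, exploiting minimality to reduce to the case where $G$ has a very restricted normal structure, and then derive a contradiction from the hypothesis that every element has prime power order (an \emph{EPPO group}, sometimes called a CP-group). First I would observe that we may assume $|\pi(G)|\geq 3$ and seek a contradiction. Take a minimal counterexample $G$. Every proper subgroup and every proper quotient of $G$ is again solvable with all elements of prime power order, hence satisfies the conclusion $|\pi(\cdot)|\leq 2$ by minimality of $G$. In particular, for any nontrivial proper normal subgroup $N\trianglelefteq G$ we have $|\pi(N)|\leq 2$ and $|\pi(G/N)|\leq 2$, and since $\pi(G)=\pi(N)\cup\pi(G/N)$ this already forces $|\pi(G)|\leq 4$; the real work is to push this down to $2$.

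The key structural step uses solvability. Since $G$ is solvable, let $N$ be a minimal normal subgroup; then $N$ is an elementary abelian $p$-group for some prime $p$. I would next examine the Fitting subgroup $F:=\mathbf{F}(G)$ and use the standard fact that $\mathbf{C}_G(F)\leq F$ for solvable $G$. The crucial claim is that $F$ is a $p$-group for a single prime $p$: if two distinct primes $p,r$ both divided $|F|$, then $F$ (being nilpotent) would contain commuting elements $x$ of order $p$ and $y$ of order $r$, producing an element $xy$ of order $pr$, contradicting the EPPO hypothesis. So $F$ is a $p$-group. Now pick a prime $r\neq p$ dividing $|G|$ (possible since $|\pi(G)|\geq 2$) and an $r$-element $y$; the action of $y$ on $F$ must be fixed-point-free on $F/\Phi(F)$ — indeed if $y$ centralized a nontrivial element of $F$ we would again get an element of order $pr$ — so by Thompson's theorem (or directly, since a group admitting a fixed-point-free automorphism of prime order is nilpotent) the relevant semidirect product is constrained, and one shows $G$ has a normal $p$-complement or a normal Sylow structure that leaves room for at most one more prime.

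To close the argument I would do a short case analysis on $t(G)$, the number of prime graph components, or equivalently argue directly: the EPPO condition says precisely that $\Gamma(G)$ has \emph{no edges at all} (any edge $p\sim r$ gives an element of order $pr$), so $\Gamma(G)$ is a totally disconnected graph on $|\pi(G)|$ vertices. Thus $G$ is a group all of whose Sylow normalizers/centralizers are "isolated" — every maximal element order is a prime power. For solvable such groups, using $\mathbf{C}_G(F)\leq F$ together with $F$ being a $p$-group, the quotient $G/F$ acts faithfully and fixed-point-freely (in the appropriate sense) on $F$, and a solvable group acting faithfully and fixed-point-freely on a $p$-group has order divisible by at most one prime other than… — here one invokes the classification of Frobenius complements / Thompson's normal $p$-complement theorem to conclude $|\pi(G/F)|\leq 1$, whence $|\pi(G)|\leq 2$, the desired contradiction.

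The main obstacle I anticipate is the very last step: showing that $G/F$ can involve only one prime. The naive hope that $G/F$ is itself EPPO is false in general (quotients of EPPO groups are EPPO, so this is actually fine), but one still must rule out $G/F$ being, say, a nonabelian group of order $qr$ acting on $F$; this is where one genuinely needs that the action is fixed-point-free (no element of $G\setminus F$ centralizes a nontrivial element of $F$, again by the no-edge property) so that $G/F$ embeds in a Frobenius complement, and Frobenius complements that are solvable have Sylow subgroups that are cyclic or generalized quaternion — but more is needed, namely that two such primes would force an element of order $qr$ inside the complement. That final contradiction is the crux, and it is exactly why the solvability hypothesis (and not merely EPPO) is essential: a full treatment would cite the structure theory of Frobenius complements from \cite{Gorenstein}.
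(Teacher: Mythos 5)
The paper does not prove this lemma at all: Lemma~\ref{lem:solv} is quoted verbatim from Higman's 1957 paper, so your sketch has to stand on its own. Your overall strategy (minimal counterexample; the Fitting subgroup $F$ is a $p$-group by the prime-power-order hypothesis; $C_G(F)\le F$ by solvability; elements of order coprime to $p$ act fixed-point-freely; hence a Frobenius configuration whose complement must be shown to involve only one prime) is a legitimate route, essentially the standard modern proof. But two things need repair. First, some intermediate assertions are wrong or idle: $G/F$ does not act on $F$ (only $G/Z(F)$ acts faithfully), and no such action could be fixed-point-free unless $p\nmid|G/F|$, since a $p$-element acting on a nontrivial $p$-group always has fixed points. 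You must therefore run the reduction properly: with $N$ a minimal normal subgroup (elementary abelian $p$-group), minimality gives $|\pi(G/N)|\le 2$, so $|\pi(G)|=3$ forces $p\nmid|G/N|$ and $N=F(G)$ is the normal Sylow $p$-subgroup; then a Hall $p'$-subgroup $H$ exists and every nontrivial element of $H$ acts fixed-point-freely on $N$, so $G=N\rtimes H$ is a Frobenius group with solvable complement $H$ and $|\pi(H)|=2$. Invoking Thompson's fixed-point-free theorem accomplishes nothing here, since it would only re-prove nilpotency of the kernel, which you already have.

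Second, and decisively, you stop exactly at the crux: showing that a solvable Frobenius complement $H$ with two prime divisors $q,r$ must contain an element of order $qr$. You wave at ``the structure theory of Frobenius complements,'' but this is the entire remaining content of the lemma and must be argued; it follows quickly from the facts the paper records in Lemma~\ref{lem:Thombson}. Indeed $H\ne 1$ is solvable, so $F(H)\ne 1$, and by the prime-power-order hypothesis $F(H)$ is a $q$-group for a single prime $q$. Sylow subgroups of a Frobenius complement are cyclic or generalized quaternion, so $F(H)$ has a unique subgroup $\langle z\rangle$ of order $q$, which is characteristic in $F(H)$ and hence normal in $H$. Choosing $y\in H$ of order $r$ by Cauchy's theorem, $\langle z\rangle\langle y\rangle$ is a subgroup of $H$ of order $qr$, which is cyclic by Lemma~\ref{lem:Thombson}(c), producing an element of order $qr$ in $G$ and the desired contradiction. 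With the reduction fixed and this final paragraph supplied, your outline becomes a complete proof; as written, the key step is missing.
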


\begin{lemma}\cite[Theorem 10.3.1]{Gorenstein}\label{lem:Thombson}
Let $G$ be a Frobenius group with kernel $K$ and complement $H$ . Then
\begin{enumerate}
  \item[(a)] $K$ is a nilpotent group.
  \item[(b)] $|H|$ divides $|K|-1$.
  \item[(c)] Every subgroup of $H$ of order $pq$ with $p$ and $q$ primes (not necessarily distinct), is cyclic.
  \item[(d)] Every Sylow subgroup of $H$ of odd order is cyclic and a Sylow $2$-subgroup of $H$ is either cyclic, or a generalized quaternion group.
  \item[(e)] If $H$ is non-solvable group, then $H$ has a subgroup of index at most $2$ isomorphic to $SL(2,5)\times M$, where $M$ has cyclic Sylow $p$-subgroups and $\left(| M| ,30\right)=1$.
\end{enumerate}
\end{lemma}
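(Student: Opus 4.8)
The plan is to derive every clause from the single fact that the complement $H$ acts on the kernel $K$ by conjugation \emph{semiregularly} on nonidentity elements. First I would check that $C_H(k)=1$ for each $k\in K\setminus\{1\}$: if $h\in H$ centralises such a $k$ then $h=khk^{-1}\in H\cap H^{k^{-1}}$, and since $k\notin H$ this intersection is trivial by the defining property of a Frobenius group, so $h=1$. Dually $C_K(h)=1$ for each $h\in H\setminus\{1\}$. Thus the conjugation action of $H$ on $K\setminus\{1\}$ has all stabilisers trivial, so every orbit has length $|H|$ and counting gives $|H|\mid |K|-1$, which is exactly (b); in particular $|H|$ and $|K|$ are coprime, so for any prime $r$ dividing $|K|$ we have $r\nmid|H|$. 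The same computation says that each nonidentity $h\in H$ acts as a \emph{fixed-point-free} automorphism of $K$, and this is the single engine behind the remaining parts.

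For (a) I would take $h\in H$ of prime order; by the above it is a fixed-point-free automorphism of $K$ of prime order, so Thompson's theorem on groups admitting such an automorphism forces $K$ to be nilpotent. I regard this as one of the two genuinely deep inputs and would cite it rather than reprove it.

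For (c) and (d) I would analyse $H$ through a faithful fixed-point-free linear action. Using (a) and the coprimality just noted, the coprime fixed-point lemma ($C_{K_1}(h)=1$ forces $C_{K_1/K_2}(h)=1$) lets me pass to a chief factor of $K$, an elementary abelian $r$-group $V$ on which $H$ acts faithfully with no nonidentity element fixing a nonzero vector; over $\overline{\mathbb{F}_r}$ this means no nonidentity element has eigenvalue $1$. The crucial sublemma is that $H$ contains no copy of $\Zbb_p\times\Zbb_p$: for such an $E$, with its $p+1$ subgroups $E_i$ of order $p$, the standard fixed-point identity $\sum_i\dim V^{E_i}=\dim V+p\dim V^{E}$ (valid since $p\neq r$) has vanishing left side, forcing $\dim V=0$, a contradiction. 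Hence every abelian subgroup of $H$ is cyclic, so each Sylow subgroup has a unique subgroup of order $p$; the classical description of such $p$-groups then makes odd Sylow subgroups cyclic and Sylow $2$-subgroups cyclic or generalised quaternion, which is (d), and in particular every subgroup of order $p^2$ is cyclic. For the remaining case $|A|=pq$ with $p\neq q$, I would rule out the nonabelian $\Zbb_q\rtimes\Zbb_p$: restricting $V$ to the normal $\Zbb_q$ splits it into nontrivial character spaces which the order-$p$ element permutes in free $p$-orbits, and on each orbit the characteristic polynomial of that element equals $(X^p-1)^{d}$, so $1$ occurs as an eigenvalue, contradicting fixed-point-freeness. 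Thus $A$ is abelian and, by (d), cyclic, giving (c).

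Part (e) is the second deep ingredient and the point I expect to be the main obstacle. Here I would invoke Zassenhaus's classification of Frobenius complements: the constraints from (c) and (d) amount to periodic cohomology with all Sylows cyclic or generalised quaternion, which forces a solvable complement to be metacyclic-by-small and pins the solvable residual of a nonsolvable complement to the unique perfect Frobenius complement $SL(2,5)$. Splitting off the complementary Z-group factor $M$, on which $SL(2,5)$ acts trivially, and absorbing an outer automorphism of order at most $2$, yields the asserted index-$\le2$ subgroup isomorphic to $SL(2,5)\times M$ with $M$ of cyclic Sylow subgroups and $(|M|,30)=1$. The honest difficulty lies entirely in (a) and (e); the elementary core (b)--(d) drops out cleanly once semiregularity and the fixed-point identity are in place.
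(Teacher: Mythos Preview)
The paper does not prove this lemma at all: it is quoted verbatim from Gorenstein's textbook and used as a black box, with no argument supplied. So there is nothing in the paper to compare your proof against.

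That said, your sketch is a correct outline of the classical proof and is essentially the argument one finds in the cited reference. The derivation of (b) from semiregularity is standard and clean; the reduction of (c)--(d) to the nonexistence of $\Zbb_p\times\Zbb_p$ and of a nonabelian $\Zbb_q\rtimes\Zbb_p$ inside $H$, via a fixed-point-free linear action on an elementary abelian section of $K$, is exactly the right mechanism; and you correctly flag (a) and (e) as the two places where genuine depth enters (Thompson's fixed-point-free automorphism theorem and Zassenhaus's classification of Frobenius complements, respectively). One small correction: you do not need, and in general will not have, a chief factor of $K$ on which all of $H$ acts \emph{faithfully}. Fortunately your arguments for (c) and (d) never use faithfulness of $H$; they only need that each nonidentity element of the particular subgroup ($\Zbb_p\times\Zbb_p$ or $\Zbb_q\rtimes\Zbb_p$) acts without nonzero fixed vectors on some nontrivial chief factor, and your coprime fixed-point lemma already delivers that on every chief factor. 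With that adjustment the sketch goes through.
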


A group $G$ is said to be a \emph{$2$-Frobenius} group if there exists a normal series $1\unlhd H\lhd K\unlhd G$ such that $K$ and $G/H$ are Frobenius groups with kernel $H$ and $K/H$, respectively.

\begin{lemma}\cite[Theorem 2]{Higman}\label{lem:twofrobenius}
Let $G$ be 2-Frobenius group of even order. Then
\begin{enumerate}
\item[(a)] $t(G)=2$, $\pi_1(G)=\pi(H)\cup\pi(G/K)$ and $\pi_2(G)=\pi(K/H)\}$.
\item[(b)] $G/K$ and $K/H$ are cyclic, $|G/K|\mid|\Aut(K/H)|$, and $( |G/K|,|K/H|)=1$.
\item[(c)] $H$ is nilpotent and $G$ is solvable group.
\end{enumerate}
\end{lemma}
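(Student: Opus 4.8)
The plan is to extract the structure of $G$ from the defining series $1\unlhd H\lhd K\unlhd G$ and then read off both the prime-graph statement and the solvability claim. Fix a Frobenius complement $C$ of $K$, so that $K=H\rtimes C$ with $C\cong K/H$, and note that in the Frobenius group $G/H$ the kernel is $K/H$ while the complement is $Q:=G/K$, acting faithfully and fixed-point-freely on $K/H$. By Lemma~\ref{lem:Thombson}(a) applied to $K$, the kernel $H$ is nilpotent, and applied to $G/H$, the kernel $K/H$ is nilpotent; hence $C$ is nilpotent too. Since the kernel and complement of a Frobenius group have coprime orders, $\gcd(|H|,|K/H|)=\gcd(|K/H|,|Q|)=1$, so $\pi(H)$ and $\pi(Q)$ are each disjoint from $\pi(K/H)$; put $\pi_{1}:=\pi(H)\cup\pi(Q)$ and $\pi_{2}:=\pi(K/H)$.

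For (b), the nilpotent group $C\cong K/H$ is the direct product of its Sylow subgroups, and by Lemma~\ref{lem:Thombson}(d) those of odd order are cyclic while the Sylow $2$-subgroup is cyclic or generalized quaternion; in every case $K/H$ has at most one involution. If $|K/H|$ were even, that unique involution would be characteristic in $K/H$, hence fixed by the nontrivial group $Q\le\Aut(K/H)$ (nontrivial because $G/H$ is a genuine Frobenius group), contradicting fixed-point-freeness. Thus $|K/H|$ is odd, all its Sylow subgroups are cyclic, and $K/H$ is cyclic; in particular $2\notin\pi_{2}$, consistent with the convention $2\in\pi_{1}$. Faithfulness of the Frobenius action embeds $Q$ into the abelian group $\Aut(K/H)$, so $|G/K|$ divides $|\Aut(K/H)|$ and $Q$ is abelian; an abelian group is not generalized quaternion, so Lemma~\ref{lem:Thombson}(d) forces every Sylow subgroup of $Q$ to be cyclic, whence $G/K$ is cyclic. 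Combined with $\gcd(|G/K|,|K/H|)=1$ this is (b); and (c) follows at once, the series $1\unlhd H\lhd K\unlhd G$ having the nilpotent section $H$ and the cyclic sections $K/H$ and $G/K$.

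It remains to prove (a), i.e.\ that $t(G)=2$ with components $\pi_{1}$ and $\pi_{2}$. First, no edge of $\Gamma(G)$ joins $\pi_{2}$ to $\pi_{1}$: given a prime $p\in\pi_{2}$, choose $c\in C$ with $|c|=p$; since $C$ is abelian and acts fixed-point-freely on $H$ one has $\C_{K}(c)=C$, and any $g\in\C_{G}(c)$ maps in $G/H$ to an element centralizing the nontrivial element $cH$ of the Frobenius kernel $K/H$, forcing $gH\in K/H$ and so $g\in K$; hence $\C_{G}(c)=C$, of order $|K/H|$, a $\pi_{2}$-number. As every element of order $p$ is $G$-conjugate to such a $c$, there is no element of order $pr$ with $r\in\pi_{1}$, so $\pi_{2}$ is a union of components; being the prime set of the cyclic group $K/H$ it is a clique, hence a single component. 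Second, $\pi_{1}$ is connected: $\pi(H)$ is a clique because $H$ is nilpotent and $\pi(Q)$ is a clique because $G/K$ is cyclic, so if $\pi(H)$ and $\pi(Q)$ lay in different components (in particular were disjoint), then every element of $G$ of prime order coprime to $|H|$ would centralize no nontrivial element of $H$; via Schur--Zassenhaus $G=H\rtimes R$ with $R\cong G/H$, every nontrivial element of $R$ would act fixed-point-freely on $H$, and $G$ would be a Frobenius group with complement $R$. But $R\cong G/H$ is a nontrivial Frobenius group, while a Frobenius complement cannot itself be a nontrivial Frobenius group, for it would contain a non-cyclic subgroup of order $pq$, contradicting Lemma~\ref{lem:Thombson}(c). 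This contradiction yields $t(G)=2$ and finishes (a).

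The delicate part is (a): ruling out edges from $\pi_{2}$ rests on the centralizer computation above, which exploits that $K/H$ is the Frobenius kernel of $G/H$, and connectedness of $\pi_{1}$ rests on the observation that a group cannot be simultaneously a nontrivial Frobenius group and a nontrivial $2$-Frobenius group. Everything else — nilpotency of $H$, cyclicity of $K/H$ and $G/K$, the divisibility $|G/K|\mid|\Aut(K/H)|$, and solvability of $G$ — is a direct application of Lemma~\ref{lem:Thombson}.
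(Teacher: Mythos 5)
Your argument is essentially correct, and it is worth noting that the paper itself offers no proof to compare against: Lemma~\ref{lem:twofrobenius} is simply quoted from \cite[Theorem 2]{Higman}, so what you have produced is a self-contained derivation from Lemma~\ref{lem:Thombson}. Your chain of steps all checks out: nilpotency of $H$ and $K/H$ from Lemma~\ref{lem:Thombson}(a); oddness of $|K/H|$ because its unique involution (its Sylow structure coming from Lemma~\ref{lem:Thombson}(d) applied to the complement $C\cong K/H$ of $K$) would be a fixed point of the nontrivial fixed-point-free action of $G/K$; cyclicity of $K/H$, then of $G/K$ (abelian because it embeds in the automorphism group of a cyclic group, so its Sylow $2$-subgroup cannot be generalized quaternion), together with the two coprimality statements; the centralizer computation $\C_{G}(c)=C$ for $c\in C$ of prime order $p\in\pi(K/H)$, which, combined with the Sylow argument, severs $\pi_2$ from $\pi_1$; and the Schur--Zassenhaus contradiction to show $\pi(H)$ and $\pi(G/K)$ lie in one component. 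Two implicit points are fine but deserve a word: adjacency of two primes in the cyclic quotient $G/K$ lifts to $\Gamma(G)$ because the order of a preimage is divisible by the order of its image, and Schur--Zassenhaus is applicable in your contradiction argument because the assumed disjointness of $\pi(H)$ and $\pi(G/K)$, together with $(|H|,|K/H|)=1$, gives $(|H|,|G/H|)=1$.

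The one soft spot is the closing assertion that ``a Frobenius complement cannot itself be a nontrivial Frobenius group, for it would contain a non-cyclic subgroup of order $pq$'': the parenthetical reason is precisely what needs proof, and in full generality it takes a short argument (a minimal invariant subgroup inside $\Omega_1(\Z(P))$ of the kernel, plus the fact that a generalized quaternion group admits no fixed-point-free automorphism, since its central involution is characteristic). In your situation it is immediate, and you should say so: at that stage you already know $K/H$ is cyclic of odd order, so for any $p\mid |K/H|$ its unique subgroup $W$ of order $p$ is characteristic in $K/H$, hence invariant under any $y\in Q$ of prime order $q$; then $W\langle y\rangle$ is a subgroup of $G/H\cong R$ of order $pq$ which is non-abelian (the action of $y$ on $W$ is nontrivial by fixed-point-freeness), contradicting Lemma~\ref{lem:Thombson}(c) applied to the Frobenius complement $R$. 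With that one line added, your proof is complete.
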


\begin{lemma}\cite{Williams} \label{lem:william}
Let $G$ be a finite group with $t(G)\geq 2$ and $2\in \pi_{1}$. Then $G$ is one of the following groups:
\begin{enumerate}
\item[(a)] $G$ is a Frobenius or 2-Frobenuis group.
\item[(b)] $G$ has a normal series $1\unlhd H\lhd K\unlhd G$, such that $H$ is nilpotent $\pi_{1}$-group, $G/K$
 is a $\pi_{1}$-group, and $K/H$ is non-Abelian finite  simple group such that $\left| G/H\right| \mid \left| \Aut(K/H)\right|$. Moreover, any odd order component of $G$ is also an odd order component of $K/H$.
\end{enumerate}
\end{lemma}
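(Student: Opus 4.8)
The plan is to follow the Gruenberg--Kegel strategy, arguing by induction on $|G|$ and separating the solvable and non-solvable cases. Write $\pi_1,\ldots,\pi_t$ for the components of $\Gamma(G)$ with $2\in\pi_1$, and set $\sigma:=\pi_2\cup\cdots\cup\pi_t$. The central tool I would isolate at the outset is the following consequence of disconnectedness: for $i\geq 2$ and any nontrivial $\pi_i$-element $g$, the centralizer $\C_G(g)$ is a $\pi_i$-group. Indeed, if some prime $r\notin\pi_i$ divided $|\C_G(g)|$, then $r$ lies in a different component from the prime dividing $|g|$, yet an element of order $r$ commuting with $g$ would produce an element of mixed order across two components, a contradiction. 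From this I would extract the inheritance lemma: for any $N\unlhd G$ and any $i\geq 2$, the component $\pi_i$ (and the corresponding order component $m_i$) lies entirely in $\pi(N)$ or entirely in $\pi(G/N)$, and remains an order component of whichever section it lands in.

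First I would dispose of the solvable case, aiming at alternative (a). Let $F=F(G)$ be the Fitting subgroup; since $G$ is solvable, $\C_G(F)\leq F$. The isolation property forces the Hall $\pi_i$-subgroups ($i\geq 2$) to act fixed-point-freely on the relevant chief factors, because any common nontrivial fixed point would yield an element mixing two components. Tracking this through a chief series, I would show that $G$ acquires a Frobenius or a $2$-Frobenius structure; here Lemma~\ref{lem:twofrobenius} and the Frobenius structure in Lemma~\ref{lem:Thombson} serve to recognize and constrain these configurations, and Lemma~\ref{lem:solv} controls the degenerate possibilities in which every element has prime power order.

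For the non-solvable case I would establish alternative (b). Let $H$ be the solvable radical of $G$ and pass to $\bar G:=G/H$, which has trivial solvable radical, so its socle is a direct product $T_1\times\cdots\times T_k$ of non-abelian simple groups and $\bar G$ embeds into $\Aut(\mathrm{soc}(\bar G))$. The key reduction is $k=1$: since each $T_j$ has even order by Feit--Thompson, a product of two or more factors makes $2$ adjacent to every prime dividing the product, which (together with the isolation property) is incompatible with $t(G)\geq 2$; hence $K/H:=\mathrm{soc}(\bar G)$ is a single non-abelian simple group $S$, where $K$ is the preimage of $S$ in $G$. It then remains to verify that $H$ is a nilpotent $\pi_1$-group and that $G/K$ is a $\pi_1$-group. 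A prime from $\sigma$ dividing $|H|$ or $|G/K|$ would, via the action on $S$ or on a chief factor inside $H$, create an element mixing that prime with a prime dividing $|S|$ from another component; nilpotence of $H$ follows because a non-nilpotent $\pi_1$-radical would similarly link primes across components. Finally the embedding $\bar G\hookrightarrow\Aut(S)$ gives $|G/H|\mid|\Aut(S)|$.

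For the last assertion, if $m$ is an odd order component of $G$, its primes lie in some $\pi_i$ with $i\geq 2$, hence by the previous paragraph divide $|S|$ but neither $|H|$ nor $|G/K|$; since the adjacency relations among these primes are inherited from $S=K/H$ through the inheritance lemma, $m$ is also an odd order component of $K/H$. The main obstacle is the solvable case: converting the abstract ``no mixed-order element'' hypothesis into a genuine fixed-point-free action, and then into the precise Frobenius/$2$-Frobenius dichotomy, requires a delicate chief-series induction, and ruling out the intermediate configurations is where the bulk of the work lies. By comparison, the reduction of the socle to a single simple group is comparatively clean once the isolation property and Feit--Thompson are in hand.
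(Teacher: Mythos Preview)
The paper does not supply a proof of this lemma at all: it is quoted verbatim from Williams' paper \cite{Williams} (the Gruenberg--Kegel theorem), so there is nothing in the paper to compare your argument against. Your sketch follows the standard Gruenberg--Kegel strategy underlying Williams' original proof, and the broad architecture---isolate $\sigma$-elements via their centralizers, dispose of the solvable case as Frobenius or $2$-Frobenius, and in the non-solvable case reduce the socle of $G/H$ to a single simple factor---is correct.

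That said, a couple of steps in your outline would need tightening before they constitute a proof. In the reduction $k=1$, the observation that two commuting simple factors of even order make $2$ adjacent to every prime of the socle is made in $\bar G=G/H$, but the prime graph hypothesis is on $G$; you need the inheritance lemma (or a direct lifting argument) to transport the contradiction back to $\Gamma(G)$. Similarly, your claim that $H$ is nilpotent is not a consequence of $H$ being a $\pi_1$-group together with disconnectedness alone; in Williams' argument one shows more precisely that $H$ is contained in the Fitting subgroup by analyzing the action of a $\sigma$-element on a minimal normal subgroup of $G$ inside $H$. Finally, the solvable case is, as you acknowledge, the most delicate part, and your sketch does not yet indicate which chief factor carries the Frobenius kernel or how the $2$-Frobenius configuration is forced when the kernel--complement roles alternate; this is exactly where Williams (following Gruenberg--Kegel) does the real work.
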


An \emph{independent set} of a graph $\Gamma$ is a set of vertices of $\Gamma$  no two of which are adjacent. The \emph{independence number} $\alpha(\Gamma)$ of $\Gamma$ is the maximum cardinality of an independent set among all independent sets of $\Gamma$. For convenience, if $G$ is a group, we set $\alpha(G):=\alpha(\Gamma(G))$. Moreover, for a vertex $r\in\pi(G)$, let $\alpha(r,G)$ denote the maximal number of vertices in independent sets of $\Gamma(G)$ containing $r$.

\begin{lemma}\cite[Theorem 1]{Vasilev}\label{almost simple}
Let $G$ be a finite group with $\alpha(G)\geq 3$ and $\alpha(2,G)\geq 2$, and let $K$ be the maximal normal solvable subgroup of $G$. Then the quotient group $G/K$ is an almost simple group, that is, there exists a non-Abelian finite  simple group $S$ such that $S\leq G/K\leq \Aut(S)$.
\end{lemma}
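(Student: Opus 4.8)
This is Vasiliev's theorem, and a complete proof inevitably invokes the classification of finite simple groups; here is the shape of the argument I would give. The plan is to work with $\bar G := G/K$, which by maximality of $K$ has trivial solvable radical, and to show that its socle is a single non-abelian simple group. Since $\bar G$ has no nontrivial solvable normal subgroup, $P := \mathrm{Soc}(\bar G) = S_1 \times \cdots \times S_m$ is a direct product of non-abelian simple groups permuted by $\bar G$, one has $C_{\bar G}(P) = 1$, and hence $\bar G$ embeds into $\Aut(P)$; so it suffices to prove $m = 1$, since then $S := S_1 \trianglelefteq \bar G \leq \Aut(S)$ and $\bar G$ is almost simple. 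First note that $\alpha(G) \geq 3$ forces $G$ to be non-solvable --- a solvable group with three pairwise non-adjacent primes would have a Hall $\{p,q,r\}$-subgroup whose prime graph has three connected components, hence (by Lemma~\ref{lem:william}) a non-abelian simple section --- so $K \neq G$ and $\bar G \neq 1$. The tool used throughout is the elementary observation that $\Gamma(N) \subseteq \Gamma(G)$ and $\Gamma(G/N) \subseteq \Gamma(G)$ for every $N \trianglelefteq G$: if $G/N$ has an element of order $rs$ with $r \neq s$ primes, then any preimage has order divisible by $rs$ and a suitable power of it has order exactly $rs$. Consequently, any adjacency produced inside $P$, or inside any section of $G$, is an adjacency of $\Gamma(G)$.

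The crux is the claim: \emph{if $m \geq 2$, then the vertex $2$ is adjacent in $\Gamma(G)$ to every other vertex of $\pi(G)$} --- which contradicts the hypothesis $\alpha(2, G) \geq 2$ and so forces $m = 1$. Since every non-abelian simple group has even order, for each prime $r \in \pi(P)$ we may pick $i \neq j$ with $r \mid |S_j|$, take a $2$-element of $S_i$ and an $r$-element of $S_j$: these commute and yield an element of order $2r$ in $P \leq G$, so $2 \sim r$. It remains to handle the primes of $\pi(G) \setminus \pi(P)$, which come either from the outer part $\bar G/P \hookrightarrow \Out(P)$ (diagonal, field and graph automorphisms together with the permutation of the $S_i$) or from $|K|$. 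For an outer prime $r$ one inspects the structure of $\Out(S_i)$ and shows, via the classification, that $r$ still centralises an involution in the relevant section, whence $2 \sim r$; for a prime $r \mid |K|$ one applies a Frattini argument to a Sylow $r$-subgroup $R$ of $K$, noting that a component of $\bar G$ survives in $N_G(R)$ modulo its solvable radical, to again produce an element of order $2r$. Thus $2$ would be a dominant vertex, i.e.\ $\alpha(2, G) = 1$, a contradiction; hence $m = 1$ and $\bar G$ is almost simple.

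The step I expect to be genuinely delicate is this last one: once the socle of $G/K$ has at least two factors, verifying that $2$ is joined to \emph{every} prime of $\pi(G)$. The awkward sub-cases are the primes introduced by field automorphisms, which need not divide any $|S_i|$ --- for instance a section built on $L_2(32)$, where $5 \notin \pi(L_2(32))$ but $5 \mid |\Aut(L_2(32))|$ --- and the primes dividing the solvable radical $K$; controlling both requires detailed information on automorphism groups of simple groups and on centralisers of semisimple and field automorphisms. By contrast, the reduction to $\bar G$, the socle decomposition, and the observation $\Gamma(G/N) \subseteq \Gamma(G)$ are routine. A clean boundary example is $G = L_2(13) \times {}^2B_2(32)$: its socle has two factors and $\alpha(G) = 3$, yet $2$ is adjacent to all of $\pi(G)$, so $\alpha(2, G) = 1$ and the theorem correctly does not apply to it.
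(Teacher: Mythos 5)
You are reproving a quoted result: the paper gives no proof of Lemma~\ref{almost simple} at all --- it is cited verbatim as \cite[Theorem 1]{Vasilev} (Vasil'ev--Gorshkov), so there is no internal argument to compare with, only the cited source. Your outline (pass to $\bar G=G/K$ with trivial solvable radical, take $P=\mathrm{Soc}(\bar G)=S_1\times\cdots\times S_m$, use $\C_{\bar G}(P)=1$ to embed $\bar G$ in $\Aut(P)$, and rule out $m\geq 2$ by showing the vertex $2$ would then acquire too many neighbours) is the standard shape of that proof, and your commuting-factors argument for primes in $\pi(P)$, the inclusion $\Gamma(G/N)\subseteq\Gamma(G)$, and the boundary example $L_2(13)\times{}^2B_2(32)$ are all correct.

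However, the two cases you yourself flag as ``delicate'' are precisely where the proof lives, and there they are asserted rather than proved, so as it stands the argument has genuine gaps. For a prime $r\mid|K|$, the Frattini factorisation $G=K\N_G(R)$ with $R\in\Syl_r(K)$ and the remark that a nonsolvable section survives above $R$ do not by themselves yield an element of order $2r$: you need an involution with nontrivial fixed points on an $r$-subgroup, which requires coprime-action arguments (e.g.\ a four-group $V$ normalising $R$ forces $R=\langle \C_R(v):v\in V,\,v\neq1\rangle$), and one must exclude the configurations in which the relevant $2$-subgroups of $\N_G(R)$ are cyclic or generalized quaternion, where only one involution is available and it may act fixed-point-freely. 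It is exactly in this analysis that the full hypothesis $\alpha(G)\geq3$ is exploited in the literature; in your sketch it is used only to get nonsolvability, which is a warning sign that the hard step has been skipped rather than done. Similarly, the outer-prime case rests on the fact that a non-inner automorphism of odd prime order of a finite simple group centralises an involution (field, graph, diagonal and permutation cases); this is true but is itself a CFSG-dependent lemma that must be stated and referenced, not settled by ``inspection.'' Finally, a small misstep: your route to nonsolvability via Lemma~\ref{lem:william} is not right (that lemma presupposes $t(G)\geq2$ and $2\in\pi_1$, and its case (a) includes solvable groups); the correct and simpler tool is Higman's theorem, Lemma~\ref{lem:solv}, applied to a Hall $\{p,q,r\}$-subgroup, exactly as the paper does in Lemma~\ref{lem:non-sol}.
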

\begin{lemma}\label{lem:non-sol}
Let $G$ be a finite group of even order with $\alpha(G)\geq 3$. Then $G$ is non-solvable, and so it is not a $2$-Frobenius group. If, moreover, $|G|_3\neq 3$ or $|G|_5\neq 5$, then $G$ is not a Frobenius group.
\end{lemma}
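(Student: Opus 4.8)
The statement bundles three assertions, and I would prove them in the order given.

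\emph{Non-solvability, hence not $2$-Frobenius.} Assume towards a contradiction that $G$ is solvable. Since $\alpha(G)\geq 3$, fix three pairwise non-adjacent primes $p,q,r\in\pi(G)$ and, using Hall's theorem, choose a Hall $\{p,q,r\}$-subgroup $H$ of $G$. Non-adjacency passes to subgroups, so $p,q,r$ remain pairwise non-adjacent in $H$; hence $H$ has no element of order $pq$, $pr$ or $qr$, i.e.\ every element of $H$ has prime power order. By Lemma~\ref{lem:solv}, $|\pi(H)|\leq 2$, contradicting $\pi(H)=\{p,q,r\}$. Thus $G$ is non-solvable. Since every $2$-Frobenius group of even order is solvable by Lemma~\ref{lem:twofrobenius}(c), $G$ is not a $2$-Frobenius group. (Note that invoking Lemma~\ref{almost simple} alone would not suffice, as one cannot exclude $\alpha(2,G)=1$ a priori; the Hall-subgroup argument avoids this.)

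\emph{Set-up for the Frobenius case.} Assume in addition $|G|_3\neq 3$ or $|G|_5\neq 5$, and suppose for contradiction that $G$ is a Frobenius group with kernel $K$ and complement $C$. The first step is to observe that $\Gamma(G)$ is the disjoint union of $\Gamma(K)$ and $\Gamma(C)$: the vertex sets $\pi(K)$ and $\pi(C)$ are disjoint since $|C|$ divides $|K|-1$ (Lemma~\ref{lem:Thombson}(b)); $\Gamma(K)$ is complete since $K$ is nilpotent (Lemma~\ref{lem:Thombson}(a)); and there is no edge joining $\pi(K)$ to $\pi(C)$, because any element of $G$ of order coprime to $|K|$ lies in a conjugate of $C$ while $\C_G(k)\leq K$ for every $1\neq k\in K$. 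As independence number is additive over disjoint unions, $\alpha(G)=\alpha(\Gamma(K))+\alpha(\Gamma(C))=1+\alpha(\Gamma(C))$, so $\alpha(G)\geq 3$ forces $\alpha(\Gamma(C))\geq 2$; in particular $\Gamma(C)$ is not complete.

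\emph{Identifying the complement.} The key point is that \emph{solvable} Frobenius complements have complete prime graphs. I would derive this from Lemma~\ref{lem:Thombson}(c),(d): for distinct $s,t\in\pi(C)$, a Hall $\{s,t\}$-subgroup $P\leq C$ has all its Sylow subgroups cyclic except possibly a generalised quaternion Sylow $2$-subgroup; using that every subgroup of $P$ of order $st$ is cyclic (Lemma~\ref{lem:Thombson}(c)) — and, when $2\in\{s,t\}$, that $P$ has a unique, central involution — one checks that $P$ contains an element of order $st$, so $s\sim t$. Hence $\Gamma(C)$ not being complete forces $C$ to be non-solvable, and Lemma~\ref{lem:Thombson}(e) provides $C_0\leq C$ with $|C:C_0|\leq 2$ and $C_0\cong \mathrm{SL}(2,5)\times M$ where $(|M|,30)=1$. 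Then $|C|_3=|C_0|_3=|\mathrm{SL}(2,5)|_3=3$ and likewise $|C|_5=5$; since $3,5\notin\pi(K)$ we conclude $|G|_3=|K|_3\,|C|_3=3$ and $|G|_5=5$, contradicting our extra hypothesis. Therefore $G$ is not a Frobenius group.

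\emph{Main obstacle.} The only non-routine ingredient is the fact that solvable Frobenius complements have complete prime graphs, which pins down the appearance of $\mathrm{SL}(2,5)$; its proof is standard but slightly delicate, needing the structure of metacyclic $\{s,t\}$-groups and of generalised quaternion $2$-groups together with Lemma~\ref{lem:Thombson}(c),(d). Alternatively it can be quoted from the classification of solvable Frobenius complements, or from known descriptions of prime graphs of Frobenius groups in the literature. Everything else is bookkeeping with Lemmas~\ref{lem:solv}--\ref{almost simple}.
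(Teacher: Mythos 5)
Your proof of the first assertion (the Hall $\{p,q,r\}$-subgroup argument via Lemma~\ref{lem:solv}, then solvability of $2$-Frobenius groups from Lemma~\ref{lem:twofrobenius}) is exactly the paper's argument. For the Frobenius part you take a genuinely different and longer route: you decompose $\Gamma(G)$ as the disjoint union of the complete graph $\Gamma(K)$ and $\Gamma(C)$, deduce $\alpha(\Gamma(C))\geq 2$, and then invoke the fact that \emph{solvable} Frobenius complements have complete prime graphs in order to force $C$ to be non-solvable before applying Lemma~\ref{lem:Thombson}(e). That auxiliary fact is true, but it is exactly the delicate ingredient you leave as a sketch, and it is not needed: the paper simply notes that the kernel is nilpotent by Lemma~\ref{lem:Thombson}(a), hence solvable, so the non-solvability of $G$ already established in the first part passes to the complement $H\cong G/K$, and Lemma~\ref{lem:Thombson}(e) applies at once; then $|H|=2^a\cdot 3\cdot 5\cdot |Z|$ with $(|Z|,30)=1$, and since $(|H|,|K|)=1$ by Lemma~\ref{lem:Thombson}(b), one gets $|G|_3=3$ and $|G|_5=5$, contradicting the extra hypothesis. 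Your route has two small merits: it makes the coprimality step $3,5\notin\pi(K)$ explicit (the paper leaves it implicit), and it localizes the independence number in the complement, which would let the Frobenius part run directly from $\alpha(G)\geq 3$ without quoting non-solvability of $G$. But as written it trades a one-line reduction for an unproved structural claim about solvable complements; if you keep your route, you should either prove that claim in full from Lemma~\ref{lem:Thombson}(c),(d) together with the metacyclic structure of groups with cyclic Sylow subgroups and the uniqueness of the involution in a Frobenius complement, or give a precise reference for it.
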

\begin{proof}
Suppose that $\alpha(G)\geq 3$. If $G$ were solvable, then it would have a Hall $\{p,q,r\}$-subgroup $T$ with $\{p,q,r\}$ an independent subset of $\Gamma(G)$. Then $p$, $q$ and $r$ are not pairwise adjacent in $\Gamma(G)$, and so each element of $T$ is of prime power order. Since $T$ is solvable, it follows from Lemma \ref{lem:solv} that $|\pi (T)|\leq 2$, which is a contradiction. Therefore, $G$ is non-solvable, and so by Lemma \ref{lem:twofrobenius}, $G$ is not a $2$-Frobenius. Let now $G$ be a Frobenius group with complement $H$ and kernel $K$. Since $G$ is non-solvable, it follows from  Lemma \ref{lem:Thombson} that $H$ has a normal subgroup $H_{0}$ with $| H:H_{0}|\leq 2$ such that $H_{0}=SL(2,5)\times Z$, where $(| Z|,30)=1$. Then $|H|=2^a\cdot 3\cdot 5\cdot |Z|$ with $a=3,4$. Therefore, $|G|_{3}=3$ and $|G|_{5}=5$.
\end{proof}

\begin{lemma}\label{lem:alph3}
Let $\Gamma(G)$ be the prime graph of a group $G$ with two vertices of degree $1$. Then $\alpha(G)\geq 3$ if one of the following holds:
\begin{enumerate}
  \item[(a)] $|\pi(G)|= 6$ and $\Gamma(G)$ has at least two vertices of degree $2$;
  \item[(b)] $|\pi(G)|\geq 7$ and $\Gamma(G)$ has at least two vertices of degree $3$.
\end{enumerate}
\end{lemma}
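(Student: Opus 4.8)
The plan is to argue by contradiction: suppose $\alpha(G)\leq 2$, so that $\Gamma(G)$ has no independent set of size $3$, and try to reach an impossibility from the degree hypotheses. The engine of the argument is a single elementary graph-theoretic observation: if $\alpha(\Gamma(G))\leq 2$, then for every vertex $v$ the set of non-neighbours of $v$ is a clique, since two non-adjacent non-neighbours of $v$ would together with $v$ form an independent triple. Applying this to one of the given degree-$1$ vertices, say $v$ with unique neighbour $a$, shows that $\Gamma(G)$ contains a clique $C$ on the $|\pi(G)|-2$ vertices of $\pi(G)\setminus\{v,a\}$.

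Next I would feed in the second degree-$1$ vertex $w$, with unique neighbour $b$, and split into two cases. If $w\in C$, then $w$ is adjacent to the remaining $|C|-1=|\pi(G)|-3$ vertices of $C$; since $|\pi(G)|\geq 6$ in both parts, this number is at least $3$, contradicting $\deg_G(w)=1$. Otherwise $w\notin C$, which (as $w\neq v$) forces $w=a$, hence $v\sim w$ and, by the same reasoning applied to $w$, also $v=b$; thus $\{v,w\}$ is a full connected component of $\Gamma(G)$. Deleting $v$ and $w$ leaves a subgraph $\Gamma'$ on $|\pi(G)|-2$ vertices, and since $v$ is non-adjacent to every vertex of $\Gamma'$, any two non-adjacent vertices of $\Gamma'$ would form an independent triple with $v$; hence $\alpha(\Gamma')\leq 1$, i.e.\ $\Gamma'$ is complete.

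Finally I would extract the contradiction from the degree data. If $\Gamma'$ is complete on $|\pi(G)|-2$ vertices, then every vertex of $\Gamma'$ has degree $|\pi(G)|-3$ in $\Gamma(G)$, while $v$ and $w$ have degree $1$; so the only degrees occurring in $\Gamma(G)$ are $1$ and $|\pi(G)|-3$. Under hypothesis (a) this forces all degrees into $\{1,3\}$, so there is no vertex of degree $2$, contradicting the assumption that at least two such vertices exist. Under hypothesis (b) we have $|\pi(G)|-3\geq 4$, so no vertex has degree $3$, again contradicting the hypothesis. Hence $\alpha(G)\geq 3$.

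I do not anticipate a genuine obstacle here, as the whole argument is combinatorial and short; the only points needing care are keeping the case analysis around the two degree-$1$ vertices clean and checking that the numerical bounds ($|\pi(G)|-3\geq 3$ in the first case, $|\pi(G)|-3\geq 4$ under (b)) really do hold, which they do because $|\pi(G)|\geq 6$, respectively $\geq 7$. It may also be worth noting in passing that the disjoint union $K_{|\pi(G)|-2}\cup K_2$ realises $\alpha=2$ with two vertices of degree $1$, which shows the degree hypotheses cannot simply be omitted.
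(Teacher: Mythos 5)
Your proof is correct, and it is essentially the paper's argument run in contrapositive form: your case split on whether the second degree-$1$ vertex lies in the clique of non-neighbours of the first is exactly the paper's split on whether the two degree-$1$ vertices are adjacent, and your step ``the non-neighbours form a clique, so their degrees equal $|\pi(G)|-3$'' is the contrapositive of the paper's observation that a vertex of degree $2$ (resp.\ $3$) among those non-neighbours must be non-adjacent to one of them, yielding the independent triple directly. The only cosmetic difference is that you package the conclusion as the structural statement that $\Gamma(G)$ would have to be $K_2$ together with a complete graph on the other $|\pi(G)|-2$ vertices, which also makes explicit that a single vertex of degree $2$ (resp.\ $3$) already suffices.
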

\begin{proof}
Suppose that $p_{1}$  and $p_{2}$ are two vertices of $\Gamma(G)$ with $\deg(p_{i})=1$, for $i=1,2$. Assume that $p_{1}$ is adjacent to $p_{2}$. If  $|\pi(G)|=6$, there are four vertices which are not adjacent to $p_{1}$ and $p_{2}$. Since in this case there are at least two vertices of degree $2$, there exist two non-adjacent vertices $p_{3}$ and $p_{4}$ in $\Gamma(G)$. Therefore, $\{p_{1},p_{3},p_{4}\}$ is an independent set of $\Gamma(G)$ which implies that $\alpha(G)\geq 3$. Similarly, in the case where $|\pi(G)|\geq 7$, there are at least five vertices which are not adjacent to $p_{1}$ and $p_{2}$, and since we have at least two vertices of degree $3$, we can find two non-adjacent vertices $p_{3}$ and $p_{4}$ in $\Gamma(G)$, and hence  $\{p_{1},p_{3},p_{4}\}$ is an independent set of $\Gamma(G)$, consequently,  $\alpha(G)\geq 3$. Assume now $p_{1}$ and $p_{2}$ are non-adjacent. Since $|\pi(G)|\geq 6$ and both $p_{1}$ and $p_{2}$ are of degree $1$, there exists a vertex $p_{3}$ which is not adjacent to $p_{1}$ and $p_{2}$. Thus $\{p_{1},p_{2},p_{3}\}$ is an independent set of $\Gamma(G)$, and hence $\alpha(G)\geq 3$.
\end{proof}

\begin{lemma}\cite[Lemma 2.8]{Shitian} \label{out}
Let $S$ be a finite non-abelian simple group, and let $p$ be the largest prime divisor of $|S|$ with $|S|_{p}=p$. Then $p\nmid |\Out(S)|$.
\end{lemma}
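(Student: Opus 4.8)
The plan is to reduce to the classification of finite simple groups and then dispatch the three families (alternating, sporadic, Lie type), using one preliminary observation throughout: for a non-abelian simple group $S$ the prime $p$ in the statement is necessarily odd. Indeed, if $|S|_{2}=2$ then a Sylow $2$-subgroup of $S$ is cyclic of order $2$, so by Burnside's transfer theorem $S$ would have a normal $2$-complement of index $2$, contradicting simplicity; hence $|S|_{2}\geq 4$ and $2$ is never a prime with $|S|_{p}=p$, so $p\geq 3$.

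For $S=A_{n}$ with $n\geq 5$ we have $|\Out(S)|\mid 4$, so the only prime that can divide $|\Out(S)|$ is $2$; since $p$ is odd, $p\nmid|\Out(S)|$ (and such a $p$ exists, e.g. the largest prime $\leq n$, which divides $n!/2$ exactly once because it exceeds $n/2$). For $S$ sporadic, $|\Out(S)|\in\{1,2\}$ and the same remark applies, an odd prime $p$ with $|S|_{p}=p$ being read off the ATLAS in each case.

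The substantive case is $S$ of Lie type over $\mathbb{F}_{q}$ with $q=r^{f}$, $r$ prime. Here, up to the precise extension structure, $|\Out(S)|=d\cdot f\cdot g$, where the graph part $g$ divides $6$ with $3\mid g$ only in type $\D_{4}$, and the diagonal part $d$ divides $\gcd(n,q-1)$, $\gcd(n,q+1)$, $\gcd(4,q^{n}-1)$ or a similar small quantity according to the type. Suppose for contradiction that $p\mid|\Out(S)|$; recall $p\geq 3$. Then one of: (i) $p\mid g$, whence $p=3$ and $S$ is of type $\D_{4}$; (ii) $p\mid d$, whence $p=3$ or $p$ divides the rank parameter of $S$; (iii) $p\mid f$, whence $p\leq\log_{2}q$. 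In cases (ii) and (iii) I would show that in fact $p^{2}\mid|S|$, contradicting $|S|_{p}=p$: write $e=\mathrm{ord}_{p}(q)$, which is at most the largest relevant torus degree since $p\mid|S|$, so $\Phi_{e}(q)$ divides $|S|$ and already contributes $v_{p}\geq 1$; when $p\mid f$, the lifting-the-exponent lemma gives $v_{p}(q^{e}-1)=v_{p}(r^{\,\mathrm{ord}_{p}(r)}-1)+v_{p}(f)\geq 2$ (using $\mathrm{ord}_{p}(r)\mid p-1$, so $p\nmid\mathrm{ord}_{p}(r)$), and when $p\mid d$ one argues similarly with $p\leq n$ and $\Phi_{p}(q)\mid|S|$, taking care of the $d$-factor. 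In case (i) (so $p=3$ and $S$ is $\PO_{8}^{+}(q)$ or ${}^{3}\D_{4}(q)$) I would instead apply Zsygmondy's theorem: a primitive prime divisor $\ell$ of $q^{6}-1$ (arising through the factor $\Phi_{6}(q)$, which occurs with multiplicity one in $|S|$) satisfies $\ell\equiv 1\pmod 6$, so $\ell>3=p$, and $\ell$ divides $|S|$ exactly once as long as $\Phi_{6}(q)=q^{2}-q+1$ is not a proper prime power, contradicting the maximality of $p$.

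The main obstacle is exactly this bookkeeping for the Lie type groups: one must phrase the cyclotomic/lifting-the-exponent estimates so that all the families are treated uniformly, and then handle by hand the residual finite set of exceptions — the low-rank groups such as $\L_{2}(q)$, $\L_{3}(q)$, $\U_{3}(q)$ and $\PSp_{4}(q)$, where the bound $e\leq n$ is weak while $f$ is unbounded; the finitely many Zsygmondy exceptional pairs and the cases where the relevant cyclotomic value is a proper prime power; and the few small groups of Lie type that are isomorphic to an alternating or sporadic group (already covered above). In each of these the claim reduces to an explicit finite verification.
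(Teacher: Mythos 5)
The paper does not prove this lemma at all: it is quoted with a citation to \cite{Shitian} (Lemma 2.8 there), so there is no internal proof to compare you against; your CFSG case-check is the standard route and is essentially what the cited literature does. Your overall strategy is sound: $p$ is odd (Burnside transfer applied to a cyclic Sylow $2$-subgroup of order $2$), the alternating and sporadic cases are immediate since $|\Out(S)|$ divides $4$ or $2$, and for groups of Lie type the decomposition $|\Out(S)|=d\cdot f\cdot g$ together with lifting-the-exponent is the right mechanism: if an odd prime $p$ divides $f$ (or $d$), then $p$ divides some factor $q^{d_i}\pm 1$ of $|S|$ and LTE forces $p^2\mid |S|$, contradicting $|S|_p=p$.

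Two caveats. First, as written this is a programme rather than a proof: the points you defer (the ``residual finite verification'', the Zsygmondy exceptions, the prime-power values of $\Phi_6$) are exactly where the content sits and none of it is carried out; note also that the low-rank worry about $\L_2(q)$, $\L_3(q)$, $\U_3(q)$ is not really there, because $p\mid|S|$ and $p\neq r$ already force $\mathrm{ord}_p(q)$ to divide one of the exponents occurring in $|S|$, whatever the rank, so the LTE step goes through uniformly. Second, a concrete slip in your case (i): in $|{}^3\mathrm{D}_4(q)|=q^{12}(q^8+q^4+1)(q^6-1)(q^2-1)$ the factor $\Phi_6(q)$ occurs with multiplicity two (since $q^8+q^4+1=\Phi_3\Phi_6\Phi_{12}$ and $q^6-1$ contributes another $\Phi_6$), so a primitive prime divisor $\ell$ of $q^6-1$ need not satisfy $|S|_\ell=\ell$, and the ``larger prime'' contradiction does not go through as stated. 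The Zsygmondy detour is unnecessary anyway: when $3$ divides the graph part, $S$ is $\PO_8^+(q)$ or ${}^3\mathrm{D}_4(q)$, and for these $9$ always divides $|S|$ (if $3\nmid q$, each of $q^2-1$, $q^4-1$, $q^6-1$ is divisible by $3$ and the diagonal divisor is a $2$-power or trivial; if $3\mid q$, use the $3$-part of $q^{12}$), so $|S|_3=3$ is impossible and case (i) falls under the same $p^2\mid|S|$ mechanism as your cases (ii) and (iii).
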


\section{Proof of main Result}\label{sec:main}

In this section, we prove Theorem~\ref{thm:main}. For convenience, in Table~\ref{tbl:simple}, we list the order, spectrum and degree pattern of $S:=L_3(q)$, where $q\in\{11,23,29,37,47,49$ $53,61,67,79,81,83\}$. In order to determine the degree pattern of $S$ as in the first column of Table~\ref{tbl:simple}, we use $\mu(S)$ (see \cite[Theorem 9]{Grechkoseeva}):
\begin{align*}
   \mu(S)=\left\{\dfrac{q^2+q+1}{(3,q-1)}, \dfrac{q^2-1}{(3,q-1)}, q-1, \dfrac{p(q-1)}{(3,q-1)}\right\}.
\end{align*}
Note that if $(3,q-1)=1$, then $\mu(S)=\{q^2+q+1, q^2-1,p(q-1)\}.$ We also note that the order of $S$ is
\begin{align*}
  |L_3(q)|=\dfrac{1}{(3,q-1)}q^3( q^2-1)(q^3-1).
\end{align*}

In what follows, we assume that $G$ is a finite group with $|G| =|S|$ and $\D(G)=\D(S)$, see Table~\ref{tbl:simple} below.

\begin{table}[h]
 \centering
 \small
  \caption{The order, spectrum and degree pattern of $L_3(q)$, for $q\in \{11,23,29,37,47,49,53,61,64,67,79,81,83\}$.}\label{tbl:simple}
  \begin{tabular}{llp{5cm}l}
   \hline
    \multicolumn{1}{c}{$S$} &  \multicolumn{1}{l}{$|S|$} &  \multicolumn{1}{l}{$\mu(S)$}&  \multicolumn{1}{l}{$\D(S)$} \\ \hline
    $L_3(11)$  &
    $2^4\cdot 3\cdot 5^2\cdot 7\cdot 11^3\cdot 19$ &
    $\{7\cdot 19,2^3\cdot 3\cdot 5,2\cdot 5\cdot 11\}$ &
    $(3,2,3,1,2,1)$\\
     $L_3(23)$  &
    $2^5\cdot 3\cdot 7\cdot 11^2\cdot 23^3\cdot 79$ &
    $\{7\cdot 79,2^4\cdot 3\cdot 11,2\cdot 11\cdot 23\}$ &
    $(3,2,1,3,2,1)$\\
     $L_3(29)$  &
    $2^5\cdot 3\cdot 5\cdot 7^2\cdot 13\cdot 29^3\cdot 67$ &
    $\{13\cdot 67,2^3\cdot 3\cdot 5\cdot 7,2^2\cdot 7\cdot 29\}$ &
    $(4,3,3,4,1,2,1)$\\
    $L_3(37)$  &
    $2^5\cdot 3^4\cdot 7\cdot 19\cdot 37^3\cdot 67$ &
    $\{7\cdot 67,2^3\cdot 3\cdot 19, 2^2\cdot 3^2,2^2\cdot 3\cdot 37\}$ &
    $(3,3,1,2,2,1)$\\
    $L_3(47)$&
    $2^6\cdot 3\cdot 23^2\cdot 37\cdot 47^3\cdot 61$&
    $\{37\cdot 61,2^5\cdot 3\cdot 23,2\cdot 23\cdot 47\}$&
    $(3,2,3,1,2,1)$\\
  $L_3(49)$ &
  $2^9\cdot 3^2\cdot 5^2\cdot 7^6\cdot 19\cdot 43$&
  $\{19\cdot 43,2^5\cdot 5^2, 2^4\cdot 3,2^4\cdot 7\}$&
  $(3,1,1,1,1,1)$\\
  $L_3(53)$ &
  $2^5\cdot 3^3\cdot 7\cdot 13^2\cdot 53^3\cdot 409$&
  $\{7\cdot 409,2^3\cdot 3^3\cdot 13,2^2\cdot 13\cdot 53\}$&
  $(3,2,1,3,2,1)$\\
 $L_3(61)$ &
 $2^5\cdot 3^2\cdot 5^2\cdot 13\cdot 31\cdot 61^3\cdot 97$&
 $\{13\cdot 97,2^3\cdot 5\cdot 31, 2^2\cdot 3\cdot 5,2^2\cdot 5\cdot 61\}$&
 $(4,2,4,1,2,2,1)$\\
 $L_3(67)$ &
 $2^4\cdot 3^2\cdot 7^2\cdot 11^2\cdot 17\cdot 31\cdot 67^3$&
 $\{7^2\cdot 31,2^3\cdot 11\cdot 17, 2\cdot 3\cdot 11,2\cdot 11\cdot 67\}$&
 $(4,2,1,4,2,1,2)$\\
 $L_3(79)$ &
 $2^6\cdot 3^2\cdot 5\cdot 7^2\cdot 13^2\cdot 43\cdot 79^3$&
 $\{7^2\cdot 43,2^5\cdot 5\cdot 13, 2\cdot 3\cdot 13,2\cdot 13\cdot 79\}$&
 $(4,2,2,1,4,1,2)$\\
 $L_3(81)$ &
 $2^9\cdot 3^{12}\cdot 5^2\cdot 7\cdot 13\cdot 41\cdot 73$&
 $\{7\cdot 13\cdot 73,2^5\cdot 5\cdot 41,2^4\cdot 3\cdot 5\}$&
 $(3,2,3,2,2,2,2)$\\
 $L_3(83)$ &
 $2^4\cdot 3\cdot 7\cdot 19\cdot 41^2\cdot 83^3\cdot 367$&
 $\{19\cdot 367,2^3\cdot 3\cdot 7\cdot 41,2\cdot 41\cdot 83\}$&
 $(4,3,3,1,4,2,1)$\\ \hline
\end{tabular}
\end{table}
\begin{proposition}\label{prop:1}
If $|G| =|L_{3}(11)|$ and $\D(G)=\D(L_3(11))$, then $G\cong L_3(11)$.
\end{proposition}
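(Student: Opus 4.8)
The plan is the standard one for such OD-characterisations: reduce $G$ to an almost simple quotient by the independence-number machinery, list the simple groups compatible with $|G|$ using the classification of finite simple groups, and then rule out everything except $L_3(11)$ using the degree pattern. From Table~\ref{tbl:simple}, $\D(L_3(11))=(3,2,3,1,2,1)$ for the primes $2<3<5<7<11<19$, so in $\Gamma(G)$ the primes $7$ and $19$ have degree $1$ while $3$ and $11$ have degree $2$. Since $|\pi(G)|=6$, Lemma~\ref{lem:alph3}(a) gives $\alpha(G)\geq 3$, and since $\deg_G(2)=3<|\pi(G)|-1$ some prime is not joined to $2$, so $\alpha(2,G)\geq 2$. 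Hence Lemma~\ref{almost simple} applies: writing $K$ for the maximal normal solvable subgroup of $G$, the quotient $G/K$ is almost simple with socle a non-abelian simple group $S_0$, and $|S_0|$ divides $|G|=2^4\cdot 3\cdot 5^2\cdot 7\cdot 11^3\cdot 19$.

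Next I would pin down $S_0$. Since $9\nmid|G|$, the $3$-part of $|S_0|$ is at most $3$; this is a severe restriction, and running through the orders of the finite simple groups one finds that the only non-abelian simple groups whose order divides $2^4\cdot 3\cdot 5^2\cdot 7\cdot 11^3\cdot 19$ are $A_5$, $L_2(7)$, $L_2(11)$, $J_1$ and $L_3(11)$. If $S_0=L_3(11)$ the proof ends here: $|\Aut(L_3(11))|=2|L_3(11)|=2|G|$, so from $|S_0|=|G|$ dividing $|G/K|\leq|\Aut(S_0)|$ we get $K=1$ and $L_3(11)\leq G\leq\Aut(L_3(11))$ with $|G|=|L_3(11)|$, whence $G\cong L_3(11)$. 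It therefore suffices to derive a contradiction in each of the four smaller cases (in all of which $|S_0|<|G|$, so $K\neq 1$).

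Suppose $S_0\in\{A_5,L_2(7),L_2(11)\}$. For each of these $19$ divides neither $|S_0|$ nor $|\Aut(S_0)|$, hence $19\in\pi(K)$ and a Sylow $19$-subgroup $R$ of $K$ — cyclic of order $19$ — is a Sylow $19$-subgroup of $G$. By the Frattini argument $G=K\,N_G(R)$, so $N_G(R)$ maps onto the non-solvable group $G/K$ and is itself non-solvable; as $N_G(R)/C_G(R)$ embeds in the solvable group $\Aut(R)\cong C_{18}$, the subgroup $C_G(R)$ must be non-solvable. Since $R$ is a central Sylow $19$-subgroup of $C_G(R)$, Burnside's normal $p$-complement theorem gives $C_G(R)=M\times R$ with $M$ non-solvable, so $|\pi(M)|\geq 3$ by the $p^aq^b$-theorem; as each element of $M$ commutes with $R$, every prime of $\pi(M)$ is joined to $19$ in $\Gamma(G)$, forcing $\deg_G(19)\geq 3$ and contradicting $\deg_G(19)=1$.

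Suppose finally $S_0=J_1$. Then $\Out(J_1)=1$ and $|J_1|=2^3\cdot 3\cdot 5\cdot 7\cdot 11\cdot 19$, so $G/K\cong J_1$ and $|K|=2\cdot 5\cdot 11^2$. As the number of Sylow $11$-subgroups of $K$ divides $10$ and is $\equiv 1\pmod{11}$, $K$ has a normal Sylow $11$-subgroup $Q$ of order $11^2$; it is abelian, characteristic in $K$ and hence normal in $G$, with $|\Aut(Q)|$ dividing $|\GL_2(11)|=2^4\cdot 3\cdot 5^2\cdot 11$. As $7,19\nmid|\Aut(Q)|$, coprime action makes the $7$- and $19$-elements of $G$ centralise $Q$, so $7\sim 11$ and $19\sim 11$; together with $\deg_G(7)=\deg_G(19)=1$ and $\deg_G(11)=2$ this forces $\{7,11,19\}$ to be a connected component of $\Gamma(G)$, whence $2$ is joined only to primes of $\{3,5\}$ and $\deg_G(2)\leq 2$, against $\deg_G(2)=3$. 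This leaves $S_0=L_3(11)$ and $G\cong L_3(11)$. I expect the only truly laborious part of this to be the exhaustive check that the candidate list for $S_0$ is complete: the bound $9\nmid|S_0|$ makes it manageable, but one must be careful not to overlook a small member of a Lie-type family or a sporadic simple group.
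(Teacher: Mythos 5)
Your proposal is correct, and its skeleton coincides with the paper's: apply Lemma~\ref{lem:alph3} and the degree of $2$ to get $\alpha(G)\geq 3$, $\alpha(2,G)\geq 2$, invoke Lemma~\ref{almost simple} to obtain the maximal normal solvable subgroup $K$ with $S_0\leq G/K\leq \Aut(S_0)$, identify $S_0$, and finish by an order count giving $K=1$. The difference lies in how the candidates for $S_0$ are trimmed. The paper first proves $19\notin\pi(K)$ (and $7\notin\pi(K)$) by the Hall-subgroup/Frattini argument, then uses Lemma~\ref{out} to conclude $19\in\pi(S_0)$, so that the table in \cite{ZAVARNITSIN} leaves only $J_1$ and $L_3(11)$. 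You instead keep the full list of simple groups whose order divides $|G|$ --- your five groups $A_5$, $L_2(7)$, $L_2(11)$, $J_1$, $L_3(11)$ agree with that table, so the completeness worry you flag is covered by the paper's own reference --- and you eliminate the three small candidates by a different and rather elegant argument: Frattini on a Sylow $19$-subgroup $R$ of $K$, non-solvability of $C_G(R)$, Burnside's normal $p$-complement theorem to write $C_G(R)=M\times R$, and Burnside's $p^aq^b$-theorem to force $|\pi(M)|\geq 3$ and hence $\deg_G(19)\geq 3$, against $\deg_G(19)=1$. Your exclusion of $J_1$ reaches exactly the paper's contradiction ($7\sim 11$ and $19\sim 11$ force $\{7,11,19\}$ to be a component, so $\deg_G(2)\leq 2$), only you get the adjacencies by coprime action on the normal Sylow $11$-subgroup of $K$ rather than by the paper's Frattini argument. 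What the paper's route buys is brevity (no need to handle $A_5$, $L_2(7)$, $L_2(11)$ at all); what yours buys is independence from the degree-$1$ hypothesis on $19$ at the stage of restricting $\pi(K)$, replaced by a structural normal-complement argument that would transfer to situations where the Hall-subgroup trick is less convenient. Both are complete proofs.
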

\begin{proof}
By Table~\ref{tbl:simple}, we have that  $|G|=2^4\cdot 3\cdot 5^2\cdot 7\cdot 11^3\cdot 19$ and $\D(G)=(3,2,3,1,2,1)$. Then Lemma~\ref{lem:alph3} implies that $\alpha(G)\geq3$. Furthermore, $\alpha(2,G)\geq2$ as $\deg(2)=3$ and $|\pi(G)|=6$. By Lemma \ref{almost simple}, there is a non-Abelian finite  simple group $S$ such that $S\leq G/K\leq \Aut(S)$, where $K$ is a maximal normal solvable subgroup of $G$.

We show that $\pi(K)\subseteq \{2,3,5,11\}$. Assume the contrary. Then $19\in \pi(K)$. We show that $p$ is adjacent to $19$, where $(p,a)\in\{(5,1),(5,2),(7,1)\}$. If $p\in\pi(K)$, then $K$ contains an abelain Hall subgroup of order $p^{a}\cdot 19$, and so $p$ is adjacent to $19$. If $p\not \in \pi(K)$, then by Frattini argument $G=K\N_G(P)$, where $P$ is a Sylow $19$-subgroup of $K$. Thus $\N_G(P)$ contains an element of order $p$, say $x$. So $P\langle  x \rangle$ is a cyclic subgroup of $G$ of order $p\cdot 19$ concluding that $p$ is adjacent to $19$. Therefore, both $5$ and $7$ are adjacent to $19$, and hence $19$ is of degree at least $2$, which is a contradiction. Similarly, we can show that $7\notin\pi(K)$. Hence $\pi(K)\subseteq \{2,3,5,11\}$.

We now prove that $S$ is isomorphic to $L_3(11)$. Note by Lemma \ref{out} that $19\notin\pi(\Out(S))$. Then $19\notin\pi(K)\cup\pi(\Out(S))$, and so $19\in\pi(S)$. Now by \cite[Table 1]{ZAVARNITSIN}, $S$ is isomorphic to one of the simple groups $J_1$ and $L_3(11)$.

If $S$ were isomorphic to $J_1$, then $|S|$ would be $2^3\cdot 3\cdot 5\cdot 7\cdot 11\cdot 19$, and since $\Out(S)=1$, we must have $|K|=2\cdot 5\cdot 11^2$. Let $P\in \Syl_{11}(K)$, and let $r\in \{7,19\}$. By Frattini argument, $G=K\N_G(P)$, and so  $\N_G(P)$ contains an element of order $r$, say $x$. Since $P$ is normal in $K$ and $P\cap \langle  x \rangle=1$, $L:=P\langle x \rangle$ is a subgroup of $K$ of order $r\cdot 11^{2}$. Since also $L$ is Abelian, it has an element of order $r\cdot 11$. This shows that both $7$ and $19$ are adjacent to $11$. Note that the degree of $11$ is two, and $7$ and $19$ are of degree one. Thus $2$ can not be adjacent to none of $7$, $11$ and $19$. Since $|\pi(G)|=6$, the degree of $2$ is at most $2$, which is a contradiction.

Therefore, $S$ is isomorphic to $L_3(11)$, and hence $L_3(11)\leq G/K\leq \Aut(L_3(11))$. Note that $|G|=|L_3(11)|$. Thus $K=1$, and hence $G$ is isomorphic to $L_3(11)$.
\end{proof}

\begin{proposition}\label{prop:2}
If $|G| =|L_{3}(23)|$ and $\D(G)=\D(L_3(23))$, then $G\cong L_3(23)$.
\end{proposition}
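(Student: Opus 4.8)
The plan is to follow the scheme of Proposition~\ref{prop:1} almost verbatim. From Table~\ref{tbl:simple} we have $|G|=2^5\cdot 3\cdot 7\cdot 11^2\cdot 23^3\cdot 79$ and $\D(G)=(3,2,1,3,2,1)$, so $\pi(G)=\{2,3,7,11,23,79\}$ with $\deg_G(7)=\deg_G(79)=1$, $\deg_G(3)=\deg_G(23)=2$ and $\deg_G(2)=\deg_G(11)=3$. First I would apply Lemma~\ref{lem:alph3}(a): $|\pi(G)|=6$, the two degree-$1$ vertices are $7$ and $79$, and the two degree-$2$ vertices are $3$ and $23$, so $\alpha(G)\geq 3$; moreover $\alpha(2,G)\geq 2$ since $\deg_G(2)=3<|\pi(G)|-1$. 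By Lemma~\ref{almost simple} there is a non-abelian simple group $S$ with $S\leq G/K\leq\Aut(S)$, where $K$ is the maximal normal solvable subgroup of $G$.

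The second (routine) step is to show $\pi(K)\subseteq\{2,3,11,23\}$, i.e.\ $79\notin\pi(K)$ and $7\notin\pi(K)$. Suppose $79\in\pi(K)$, so $|K|_{79}=79$. For each $p\in\{7,11\}$ I would force an edge $p\sim 79$ in $\Gamma(G)$: if $p\in\pi(K)$, the solvable group $K$ has a Hall $\{p,79\}$-subgroup of order $7\cdot 79$ or $11^a\cdot 79$ with $a\leq 2$, whose Sylow $79$-subgroup is normal (by Sylow counting) and even central (as $p^a\nmid 78=|\Aut(C_{79})|$), yielding an element of order $79p$; if $p\notin\pi(K)$, then by Frattini argument $G=K\,\N_G(P)$ for $P\in\Syl_{79}(K)\cong C_{79}$, so $\N_G(P)$ contains an element $x$ of order $p$, and $P\langle x\rangle$ is cyclic of order $79p$ because $p\nmid 78$. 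Either way $\deg_G(79)\geq 2$, a contradiction, so $79\notin\pi(K)$. Repeating the argument for the prime $7$ with the auxiliary primes $11$ and $79$ (the latter now known to be outside $\pi(K)$, so the Frattini step applies, and $7\nmid 78$, $11\nmid 6=|\Aut(C_7)|$) gives $7\notin\pi(K)$. The auxiliary primes are deliberately chosen to avoid $23$, so that the factor $23^3$ of $|G|$ never forces the analysis of a Hall subgroup with a large Sylow $23$-part.

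Third, I would identify $S$. Since $7,79\notin\pi(K)$, both lie in $\pi(G/K)\subseteq\pi(S)\cup\pi(\Out S)$; arguing as in Proposition~\ref{prop:1} via Lemma~\ref{out} (the order $|S|$ divides $|G|$, hence is far too small for $\Out(S)$ to be divisible by $79$), one gets $79\notin\pi(\Out S)$, so $79\in\pi(S)$ and $|S|_{79}=79$. Thus $S$ is a non-abelian simple group with $79\in\pi(S)$, $|S|_{79}=79$ and $|S|\mid 2^5\cdot 3\cdot 7\cdot 11^2\cdot 23^3\cdot 79$, so \cite[Table~1]{ZAVARNITSIN} leaves only finitely many possibilities, and the divisibility constraint discards all of them but $L_3(23)$ (exactly as $J_1$ was discarded in Proposition~\ref{prop:1}). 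Finally, $|S|=|L_3(23)|=|G|$ together with $S\leq G/K\leq\Aut(S)$ forces $|K|=1$ and $G/K=S$, hence $G\cong L_3(23)$.

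The main obstacle is the identification step: it rests on having the explicit list from \cite[Table~1]{ZAVARNITSIN} and on the bookkeeping that eliminates every candidate but $L_3(23)$; should the table leave a genuine alternative $S'$ with $|S'|$ a proper divisor of $|G|$, one must kill it by a Frattini-type argument that produces an impossible edge in $\Gamma(G)$ (typically an extra edge at the prime $2$, contradicting $\deg_G(2)=3$), as was done for $J_1$ in Proposition~\ref{prop:1}. The Hall-subgroup computations of the second step are elementary, but need the auxiliary primes picked so that only prime or $p^2$ Sylow parts arise.
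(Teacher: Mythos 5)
Your proposal is correct and follows essentially the same route as the paper's proof: Lemma~\ref{lem:alph3} plus Lemma~\ref{almost simple} to get the almost simple quotient $G/K$, the Hall-subgroup/Frattini argument with auxiliary primes $7$ and $11$ to exclude $79$ (and then $7$) from $\pi(K)$, Lemma~\ref{out} to place $79$ in $\pi(S)$, and \cite[Table~1]{ZAVARNITSIN} to force $S\cong L_3(23)$ and $K=1$. Your added justifications (Sylow counting for the Hall subgroups, $p\nmid 78$ for the cyclicity of $P\langle x\rangle$) only make explicit what the paper leaves implicit, and in this case the table yields $L_3(23)$ directly, so no $J_1$-style elimination is actually needed.
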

\begin{proof}
According to Table~\ref{tbl:simple}, we have that $|G|=2^5\cdot 3\cdot 7\cdot 11^2\cdot 23^3\cdot 79$ and $\D(G)=(3,2,1,3,2,1)$. Then by Lemma~\ref{lem:alph3}, we conclude that $\alpha(G)\geq3$. Since $\deg(2)=3$ and $|\pi(G)|=6$, $\alpha(2,G)\geq2$. Therefore, by Lemma \ref{almost simple}, there is a non-Abelian finite  simple group $S$ such that $S\leq G/K\leq \Aut(S)$, where $K$ is a maximal normal solvable subgroup of $G$.

We claim that $79 \notin\pi(K)$. Assume the contrary. We show that $p$ is adjacent to $79$, where $(p,a)\in\{(7,1),(11,1),(11,2)\}$. If $p\in\pi(K)$, then $K$ contains an abelain Hall subgroup of order $p^{a}\cdot 79$ which implies that $p$ is adjacent to $79$. If $p\not \in \pi(K)$, then by Frattini argument $G=K\N_G(P)$, where $P$ is a Sylow $79$-subgroup of $K$, and so $\N_G(P)$ contains an element $x$ of order $p$. Note that $P\langle x \rangle$ is a cyclic subgroup of $G$ of order $p\cdot 79$. Then $p$ is adjacent to $79$. Hence, both $7$ and $11$ are adjacent to $79$, and consequently, degree of $79$ is at least $2$, which is a contradiction. Similarly, we can show that $7\notin\pi(K)$. Therefore $\pi(K)\subseteq \{2,3,7,11,23\}$, by Lemma \ref{out}, we have that $79\notin\pi(\Out(S))$. Then $79\notin\pi(K)\cup\pi(\Out(S))$, and so $79\in\pi(S)$. Therefore by \cite[Table 1]{ZAVARNITSIN}, $S$ is isomorphic to $L_3(23)$. Since $|G|=|L_3(23)|$, we must have $K=1$,  and hence $G$ is isomorphic to $L_3(23)$.
\end{proof}

\begin{proposition}\label{prop:3}
If $|G| =|L_{3}(29)|$ and $\D(G)=\D(L_3(29))$, then $G\cong L_3(29)$.
\end{proposition}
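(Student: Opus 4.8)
The plan is to repeat the scheme of Propositions~\ref{prop:1} and~\ref{prop:2}. From Table~\ref{tbl:simple}, $|G|=2^5\cdot 3\cdot 5\cdot 7^2\cdot 13\cdot 29^3\cdot 67$ and $\D(G)=(4,3,3,4,1,2,1)$, so $\pi(G)=\{2,3,5,7,13,29,67\}$, the vertices $13$ and $67$ have degree $1$, and the vertices $3$ and $5$ have degree $3$. Since $|\pi(G)|=7$, Lemma~\ref{lem:alph3}(b) gives $\alpha(G)\geq 3$, and since $\deg(2)=4$ while $|\pi(G)|=7$, the vertex $2$ is non-adjacent to at least one vertex, so $\alpha(2,G)\geq 2$. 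Hence Lemma~\ref{almost simple} applies: there is a non-Abelian finite simple group $S$ with $S\leq G/K\leq\Aut(S)$, where $K$ is the maximal normal solvable subgroup of $G$.

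First I would show that $13\notin\pi(K)$ and $67\notin\pi(K)$. Fix $r\in\{13,67\}$, set $c:=r-1$ (so $|\Aut(C_r)|=c$, and $p\nmid c$ for each $p\in\{5,7\}$), and suppose $r\in\pi(K)$. I claim $5\sim r$ and $7\sim r$ in $\Gamma(G)$, which contradicts $\deg(r)=1$. Let $p\in\{5,7\}$. If $p\in\pi(K)$, then the solvable group $K$ has a Hall $\{p,r\}$-subgroup $H_0$ of order $p^a\cdot r$ with $p^a=|K|_p\leq p^2$; since $n_r\mid p^a\leq p^2$ and $n_r\equiv 1\pmod r$, one checks $n_r=1$, and since $p\nmid c$ the action on the normal Sylow $r$-subgroup is trivial, so $H_0\cong C_r\times Q$ with $Q$ abelian of order $p^a$; thus $H_0$ has an element of order $pr$. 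If $p\notin\pi(K)$, then $p\mid|G:K|$, and applying the Frattini argument to $P\in\Syl_r(K)$ --- which is a Sylow $r$-subgroup of $G$ because $|G|_r=r$ --- we obtain $G=K\N_G(P)$, so $\N_G(P)$ contains an element $x$ of order $p$; then $P\langle x\rangle$ is a group of order $pr$ with $P$ a normal cyclic Sylow $r$-subgroup, and since $p\nmid c$ it is cyclic. In either case $p\sim r$. Hence $\pi(K)\subseteq\{2,3,5,7,29\}$; in particular $67\in\pi(G/K)$, and by Lemma~\ref{out} (using $67^2\nmid|G|$) we get $67\notin\pi(\Out(S))$, so $67\in\pi(S)$.

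Next, with $67\in\pi(S)$, $|S|_{67}=67$, $\pi(S)\subseteq\pi(G)$ and $|S|\leq|G|$, I would invoke \cite[Table~1]{ZAVARNITSIN} for the prime $67$. Every simple group listed there other than $L_3(29)$ should be discarded immediately: either its set of prime divisors is not contained in $\{2,3,5,7,13,29,67\}$ (for instance $L_3(37)$, the Lyons group, or any $L_2(q)$ or $U_3(q)$ whose order is divisible by $67$), or its order exceeds $|G|$. If the table were to yield an auxiliary simple group $S_0$ with $\pi(S_0)=\pi(G)$ and $|S_0|<|G|$, one would eliminate it as $J_1$ is eliminated in Proposition~\ref{prop:1}: then $K\neq 1$ because $|\Out(S_0)|$ is small, and Frattini arguments on suitable Sylow subgroups of $K$ force edges of $\Gamma(G)$ that are incompatible with $\D(G)$. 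Therefore $S\cong L_3(29)$, so $L_3(29)\leq G/K\leq\Aut(L_3(29))$, and comparing orders gives $|L_3(29)|=|S|\leq|G/K|=|G|/|K|=|L_3(29)|/|K|$, whence $|K|=1$ and $G\cong L_3(29)$.

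The step I expect to be the main obstacle is this identification: verifying that, once $67\mid|S|$, $|S|_{67}=67$, $\pi(S)\subseteq\pi(G)$ and $|S|\leq|G|$ are imposed, \cite[Table~1]{ZAVARNITSIN} leaves only $L_3(29)$, and --- if some competitor with the same prime set survives the crude bounds --- carrying out the corresponding prime-graph contradiction in the style of the $J_1$ step of Proposition~\ref{prop:1}.
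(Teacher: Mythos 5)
Your proposal is correct and follows essentially the same route as the paper: Lemma~\ref{lem:alph3} and Lemma~\ref{almost simple} to get $S\leq G/K\leq\Aut(S)$, a Hall-subgroup/Frattini argument to keep $67$ out of $\pi(K)$ (the paper uses the test primes $\{5,13\}$ where you use $\{5,7\}$ and also, unnecessarily but harmlessly, exclude $13$ from $\pi(K)$), then Lemma~\ref{out} and \cite[Table 1]{ZAVARNITSIN} to force $S\cong L_3(29)$ and $K=1$. Your extra care about why the Hall subgroups contain elements of order $pr$ only makes the shared argument more explicit.
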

\begin{proof}
It follows from Table~\ref{tbl:simple} that $|G|=2^5\cdot 3\cdot 5\cdot 7^2\cdot 13\cdot 29^3\cdot 67$ and $\D(G)=(4,3,3,4,1,2,1)$. Then Lemma \ref{lem:alph3} implies that  $\alpha(G)\geq3$. Furthermore, $\alpha(2,G)\geq2$ as $\deg(2)=4$ and $|\pi(G)|=7$. Therefore, Lemma \ref{almost simple} implies that there is a non-Abelian finite  simple group $S$ such that $S\leq G/K\leq \Aut(S)$, where $K$ is a maximal normal solvable subgroup of $G$.

We show that $67\not \in \pi(K)$. Assume the contrary. Then $K$ has element of order $67$. We show that $p$ is adjacent to $67$ for all $p\in\{5,13\}$. If $p\in\pi(K)$, then we consider a cyclic Hall subgroup of order $p\cdot 67$ of $K$, and so $p$ and $73$ are adjacent. If $p\not \in \pi(K)$, then we apply Frattini argument and have that $G=K\N_G(P)$, where $P$ is a Sylow $67$-subgroup of $K$. Thus $\N_G(P)$ contains an element $x$ of order $p$. Now $P\langle x \rangle$ is a cyclic subgroup of $G$ of order $p\cdot 67$ which again implies that $p$  and $67$ are adjacent. Therefore, both $5$ and $13$ are adjacent to $67$, and hence the degree of $67$ must be at least $2$, which is a contradiction. Then $\pi(K)\subseteq \{2,3,5,7,13,29\}$. By Lemma \ref{out}, $67\notin\pi(\Out(S))$, then $67\notin\pi(K)\cup\pi(\Out(S))$, and so $67\in\pi(S)$. Using \cite[Table 1]{ZAVARNITSIN} we observe that $S$ is isomorphic to $L_3(29)$. Since $|G|=|L_3(29)|$, we conclude that $K=1$, and hence $G$ is isomorphic to $L_3(29)$.
\end{proof}

\begin{proposition}\label{prop:4}
If $|G| =|L_{3}(37)|$ and $\D(G)=\D(L_3(37))$, then $G\cong L_3(37)$.
\end{proposition}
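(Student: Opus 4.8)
The plan is to follow the template of Propositions~\ref{prop:1}--\ref{prop:3}. From Table~\ref{tbl:simple} one reads off $|G|=2^5\cdot 3^4\cdot 7\cdot 19\cdot 37^3\cdot 67$ and $\D(G)=(3,3,1,2,2,1)$, so $\pi(G)=\{2,3,7,19,37,67\}$ has six elements, the vertices $7$ and $67$ have degree $1$, and the vertices $19$ and $37$ have degree $2$; hence Lemma~\ref{lem:alph3}(a) gives $\alpha(G)\geq 3$, and since $\deg_G(2)=3$ and $|\pi(G)|=6$ one also gets $\alpha(2,G)\geq 2$. Lemma~\ref{almost simple} then provides a non-abelian finite simple group $S$ with $S\leq G/K\leq\Aut(S)$, where $K$ is the maximal normal solvable subgroup of $G$, and the task is to show $S\cong L_3(37)$ and $K=1$.

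The crucial step is to establish that $67\in\pi(S)$; once this is known, \cite[Table~1]{ZAVARNITSIN} identifies $S$. Since $\pi(G/K)\subseteq\pi(S)\cup\pi(\Out(S))$, I would first dispose of $67\in\pi(\Out(S))$ as in the earlier propositions: $67$ could only divide the diagonal or field part of $|\Out(S)|$, forcing $67$ to divide either $n$ (for $S$ of type $L_n$ or $U_n$) or the field degree of $S$, which in either case makes $|S|$ exceed $|G|$ --- this is the role of Lemma~\ref{out} here. Thus if $67\notin\pi(S)$ then $67\in\pi(K)$, and it remains to contradict this. So suppose $67\in\pi(K)$ and let $P\in\Syl_{67}(K)$, of order $67$ because $|G|_{67}=67$. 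A Frattini/Hall-subgroup argument, verbatim as in Propositions~\ref{prop:2} and~\ref{prop:3}, shows $19\sim 67$: if $19\notin\pi(K)$ use $G=K\,\N_G(P)$ to produce an element of order $19$ normalizing --- hence, as $\gcd(19,66)=1$, centralizing --- $P$; if $19\in\pi(K)$ use $|K|_{19}=19$ and $19\not\equiv 1\pmod{67}$ to see a Hall $\{19,67\}$-subgroup of $K$ is cyclic of order $19\cdot 67$. Since $\deg_G(67)=1$, the prime $19$ is then the only neighbour of $67$ in $\Gamma(G)$; in particular $2,3,7,37\not\sim 67$.

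Next, from $37\not\sim 67$ it follows that $37\in\pi(K)$ (else the same Frattini argument, with $\gcd(37,66)=1$, gives $37\sim 67$) and that the Hall $\{37,67\}$-subgroup $T$ of $K$ has no element of order $37\cdot 67$; as the Sylow $37$-subgroup of $T$ is normal, $T$ must be a Frobenius group with complement $P$, so $67\mid |K|_{37}-1$, and since the multiplicative order of $37$ modulo $67$ is $3$ this forces $|K|_{37}=37^3=|G|_{37}$, whence $37\notin\pi(S)$. A short check then gives $19\notin\pi(S)$ as well (no non-abelian simple group with $19$ dividing its order has order dividing $2^5\cdot 3^4\cdot 7\cdot 19$), so $\pi(S)\subseteq\{2,3,7\}$ and $S\in\{L_2(7),L_2(8),U_3(3)\}$. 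Using $37^3-1=2^2\cdot 3^3\cdot 7\cdot 67$, inside the Hall $\{19,37,67\}$-subgroup $W=P_{37}\rtimes C_{19\cdot 67}$ of $K$ the complement cannot act fixed-point-freely on $P_{37}$ (because $19\cdot 67\nmid 37^3-1$), so $19\sim 37$; a case analysis of $|G/K|=|G|/|K|$ against $|\Aut(S)|$ for the three candidates --- together with the structure of the normal elementary abelian $37$-subgroup $M$ of order $37^3$ that appears and of the copy of $S$ it produces in $\C_G(M)$ --- then forces $37$ to be adjacent to each of $2$, $3$ and $7$, contradicting $\deg_G(37)=2$. This establishes $67\in\pi(S)$.

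To conclude, since $67\in\pi(S)$ and $|S|$ divides $|G|=2^5\cdot 3^4\cdot 7\cdot 19\cdot 37^3\cdot 67$, \cite[Table~1]{ZAVARNITSIN} leaves only $S\cong L_3(37)$; every other simple group whose order is divisible by $67$ --- for instance $L_3(29)$, $L_2(67)$, $\mathrm{Ly}$, or $A_n$ with $n\geq 67$ --- has order divisible by a prime outside $\pi(G)$ (such as $29$, $11$ or $5$) or order exceeding $|G|$. As $|L_3(37)|=|G|$, this forces $K=1$ and $G\cong L_3(37)$. The main obstacle --- the only place where this argument goes beyond those of Propositions~\ref{prop:1}--\ref{prop:3} --- is the exclusion of $67\in\pi(K)$: because $67\mid 37^3-1$, the Hall $\{37,67\}$-subgroup need not be abelian but may be a Frobenius group $37^3{:}67$, so the quick ``abelian Hall subgroup'' argument breaks down, and one has to trap this configuration (which pins $|K|_{37}=37^3$ and shrinks $S$ to a three-element list) by a more delicate blend of arithmetic and prime-graph analysis before reaching a contradiction.
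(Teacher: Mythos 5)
Your setup (Lemma~\ref{lem:alph3}, $\alpha(2,G)\geq 2$, Lemma~\ref{almost simple}, and the final identification via \cite[Table~1]{ZAVARNITSIN} once $67\in\pi(S)$) matches the paper, and your Frattini/cyclic-Hall argument giving $19\sim 67$ when $67\in\pi(K)$ is exactly the paper's step. But from there you take a long detour whose decisive step is not actually proved: the claim that ``a case analysis of $|G/K|$ against $|\Aut(S)|$ for $S\in\{L_2(7),L_2(8),U_3(3)\}$, together with the structure of the normal elementary abelian $37$-subgroup $M$ of order $37^3$ and of the copy of $S$ it produces in $\C_G(M)$, forces $37\sim 2,3,7$'' is asserted, not demonstrated. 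Nothing you have written shows that the Sylow $37$-subgroup of $K$ is normal in $G$ (or even in $K$), nor that $S$ centralizes it, nor why any of $2,3,7$ must be adjacent to $37$: since $37^3-1=2^2\cdot 3^3\cdot 7\cdot 67$, cyclic groups of order $4$, $27$ or $7$ can act fixed-point-freely on an elementary abelian group of order $37^3$ (e.g.\ inside a Singer cycle), so simple Hall-subgroup arguments inside $K$ do not force these adjacencies, and completing this step would require genuinely new work. As it stands this is a gap.

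The detour is also unnecessary, and your stated reason for it is a red herring. You claim the ``quick abelian Hall subgroup argument breaks down'' because $67\mid 37^3-1$, so a Hall $\{37,67\}$-subgroup of $K$ could be Frobenius of shape $37^3{:}67$. True, but the argument never needs the prime $37$: since $\deg_G(67)=1$, it suffices to produce \emph{two} neighbours of $67$, and the same argument you ran for $19$ runs verbatim for $7$ as well, because $|G|_7=7$, $7\nmid 66$, $67\not\equiv 1\pmod 7$ and $7\not\equiv 1\pmod{67}$ (so a Hall $\{7,67\}$-subgroup of $K$, or the group $P\langle x\rangle$ from the Frattini argument, is cyclic of order $7\cdot 67$). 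Thus $67\in\pi(K)$ would give $7\sim 67$ and $19\sim 67$, contradicting $\deg_G(67)=1$ immediately; this is precisely the paper's proof, after which $67\notin\pi(K)\cup\pi(\Out(S))$ gives $67\in\pi(S)$, $S\cong L_3(37)$, and $K=1$.
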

\begin{proof}
Note by Table~\ref{tbl:simple} that  $|G|=2^5\cdot 3^4\cdot 7\cdot 19\cdot 37^3\cdot 67$ and $\D(G)=(3,3,1,2,2,1)$. Then by Lemma~\ref{lem:alph3}, we must have $\alpha(G)\geq3$. Furthermore, $\alpha(2,G)\geq2$ since $\deg(2)=3$ and $|\pi(G)|=6$. By Lemma \ref{almost simple}, there is a non-Abelian finite  simple group $S$ such that $S\leq G/K\leq \Aut(S)$, where $K$ is a maximal normal solvable subgroup of $G$. We show that $67\not \in \pi(K)$. Assume the contrary. $67 \in \pi(K)$. We prove that $p$ would be adjacent to $67$, where $p\in\{7,19\}$. If $p\in\pi(K)$, then $K$ contains a cyclic Hall subgroup of order $p\cdot 67$, and so $p$ is adjacent to $67$. If $p\not \in \pi(K)$, then it follows from   Frattini argument that $G=K\N_G(P)$, where $P$ is a Sylow $67$-subgroup of $K$, and so $\N_G(P)$ has an element $x$ of order $p$. Thus $P\langle x \rangle$ is a cyclic subgroup of $G$ of order $p\cdot 67$. Therefore both $7$ and $19$ are adjacent to $67$ which contradicts the fact that the degree of $67$ is $1$. Therefore, $67\notin\pi(K)$, and hence $\pi(K)\subseteq \{2,3,7,19,37\}$. Now we prove that $S\cong L_3(37)$. By Lemma \ref{out}, $67\notin\pi(\Out(S))$, then $67\notin\pi(K)\cup\pi(\Out(S))$, and so $67\in\pi(S)$. Therefore by \cite[Table 1]{ZAVARNITSIN}, $S$ is isomorphic to $L_3(37)$ as claimed. Since now $|G|=|L_3(37)|$, we must have $K=1$, and hence $G$ is isomorphic to $L_3(37)$.
\end{proof}

\begin{proposition}\label{prop:5}
If $|G| =|L_{3}(47)|$ and $\D(G)=\D(L_3(47))$, then $G\cong L_3(47)$.
\end{proposition}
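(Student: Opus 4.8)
The plan is to argue exactly as in Propositions~\ref{prop:1}--\ref{prop:4}. First, Table~\ref{tbl:simple} gives $|G|=2^6\cdot 3\cdot 23^2\cdot 37\cdot 47^3\cdot 61$ and $\D(G)=(3,2,3,1,2,1)$, so $\pi(G)=\{2,3,23,37,47,61\}$ with $\deg_G(37)=\deg_G(61)=1$ and $\deg_G(3)=\deg_G(47)=2$. Since $|\pi(G)|=6$ and $\Gamma(G)$ has two vertices of degree $1$ and at least two of degree $2$, Lemma~\ref{lem:alph3}(a) gives $\alpha(G)\geq 3$; moreover $\alpha(2,G)\geq 2$ because $\deg_G(2)=3$ and $|\pi(G)|=6$. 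Hence Lemma~\ref{almost simple} produces a non-abelian finite simple group $S$ with $S\leq G/K\leq\Aut(S)$, where $K$ is the maximal normal solvable subgroup of $G$.

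The decisive step is to show $61\notin\pi(K)$. Suppose not; then $|K|_{61}=|G|_{61}=61$, so a Sylow $61$-subgroup $P$ of $K$ is cyclic of order $61$, and I will check that $p\sim 61$ for both $p=23$ and $p=37$. If $p\in\pi(K)$, the solvable group $K$ has a Hall $\{p,61\}$-subgroup, of order $23^a\cdot 61$ with $a\in\{1,2\}$ or $37\cdot 61$; a short Sylow count shows that in each of these three cases the group is nilpotent with abelian Sylow subgroups, hence abelian, so it contains an element of order $p\cdot 61$. If $p\notin\pi(K)$, the Frattini argument gives $G=K\N_G(P)$, so $p\mid|\N_G(P)|$ and $\N_G(P)$ has an element $x$ of order $p$; since $p\nmid|\Aut(P)|=60$, the element $x$ centralizes $P$ and $P\langle x\rangle$ is cyclic of order $p\cdot 61$. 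Either way $23\sim 61$ and $37\sim 61$, forcing $\deg_G(61)\geq 2$, a contradiction; hence $61\notin\pi(K)$, and so $\pi(K)\subseteq\{2,3,23,37,47\}$ (one can exclude $37$ from $\pi(K)$ by the same device if needed).

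To finish, note that $S\leq G/K\leq\Aut(S)$ gives $\pi(S)\subseteq\pi(G)$, in which $61$ is the largest prime, and $|S|_{61}\mid|G|_{61}=61$; Lemma~\ref{out} then yields $61\notin\pi(\Out(S))$ (the remaining possibility $61\mid|\Out(S)|$ with $61\notin\pi(S)$ is impossible, as it would force the rank or defining field of $S$, hence $|S|$, to be far too large), so together with $61\notin\pi(K)$ we get $61\in\pi(S)$. Since $|S|$ divides $|G|$ and is exactly divisible by $61$, \cite[Table 1]{ZAVARNITSIN} forces $S\cong L_3(47)$: every other simple group occurring there with $61$ in its prime set (for instance $L_2(61)$ or $\PSp_4(11)$) has order divisible by a prime outside $\pi(G)$ and is discarded just as $J_1$ was in the proof of Proposition~\ref{prop:1}. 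Then $L_3(47)\leq G/K\leq\Aut(L_3(47))$ together with $|G|=|L_3(47)|$ forces $K=1$, so $G\cong L_3(47)$. The step I expect to demand the most care is the verification that $61\notin\pi(K)$ --- one must confirm that the auxiliary primes $23$ and $37$ really behave as claimed in both the ``$p\in\pi(K)$'' (abelian Hall subgroup) and the ``$p\notin\pi(K)$'' (Frattini) branches --- and, secondarily, checking via \cite{ZAVARNITSIN} that $L_3(47)$ is the only simple group compatible with $|G|$ and with $61$ being (almost) isolated in the prime graph.
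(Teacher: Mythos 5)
Your proposal is correct and follows essentially the same route as the paper's own proof: Lemma~\ref{lem:alph3} and Lemma~\ref{almost simple} give the almost simple quotient $S\leq G/K\leq \Aut(S)$, the Hall-subgroup/Frattini dichotomy with auxiliary primes $23$ and $37$ shows $61\notin\pi(K)$ (else $\deg_G(61)\geq 2$), Lemma~\ref{out} then forces $61\in\pi(S)$, and \cite[Table 1]{ZAVARNITSIN} together with $|G|=|L_3(47)|$ yields $S\cong L_3(47)$ and $K=1$. The only difference is that you make explicit some details the paper leaves implicit (abelianness of the Hall subgroups and $p\nmid|\Aut(P)|=60$ in the Frattini branch), which is fine.
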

\begin{proof}
By Table~\ref{tbl:simple}, we have that $|G|=2^6\cdot 3\cdot 23^2\cdot 37\cdot 47^3\cdot 61$ and $\D(G)=(3,2,3,1,2,1)$. It follows from Lemma \ref{lem:alph3} that $\alpha(G)\geq3$. Note that $\deg(2)=3$ and $|\pi(G)|=6$. Then $\alpha(2,G)\geq2$, and so by Lemma \ref{almost simple}, there is a non-Abelian finite  simple group $S$ such that $S\leq G/K\leq \Aut(S)$, where $K$ is a maximal normal solvable subgroup of $G$.
We claim that $61 \not \in \pi(K)$. Assume the contrary. Then $19\in \pi(K)$. We show that $p$ is adjacent to $61$, where $(p,a)\in\{(23,1),(23,2),(37,1)\}$. If $p\in\pi(K)$, then $K$ contains an abelain Hall subgroup of order $p^{a}\cdot 61$, and so $p$ is adjacent to $61$. If $p\not \in \pi(K)$, then by Frattini argument $G=K\N_G(P)$, where $P$ is a Sylow $61$-subgroup of $K$, and so $\N_G(P)$ contains an element $x$ of order $p$. Note that $P\langle  x \rangle$ is a cyclic subgroup of $G$ of order $p\cdot 61$. Then $p$ is adjacent to $61$. Therefore, $61$ is adjacent to both $23$ and $37$ in $\Gamma(G)$, and hence the degree of $61$ is at least $2$, which is a contradiction. Hence $61\notin\pi(K)$. Therefore, $\pi(K)\subseteq \{2,3,23,37,47\}$, and hence $61\notin\pi(\Out(S))$ by Lemma \ref{out}. Then $61\notin\pi(K)\cup\pi(\Out(S))$, and so $61\in\pi(S)$. Now by \cite[Table 1]{ZAVARNITSIN}, $S$ is isomorphic to $L_3(47)$, and so $L_3(47)\leq G/K\leq \Aut(L_3(47))$. Note that $|G|=|L_3(47)|$. Then $K=1$, and hence $G$ is isomorphic to $L_3(47)$.
\end{proof}

\begin{proposition}\label{prop:6}
If $|G| =|L_{3}(49)|$ and $\D(G)=\D(L_3(49))$, then $G\cong L_3(49)$.
\end{proposition}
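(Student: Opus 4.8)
The plan is to mimic the proofs of Propositions~\ref{prop:1}--\ref{prop:5}. One point needs care at the outset: $\D(L_3(49))=(3,1,1,1,1,1)$ has no vertex of degree $2$ or $3$ apart from the first, so Lemma~\ref{lem:alph3} does not apply and I compute $\alpha(G)$ by hand. From $\D(G)=(3,1,1,1,1,1)$ (with $2<3<5<7<19<43$) the vertex $2$ has degree $3$ and each of $3,5,7,19,43$ has degree $1$; the three odd primes adjacent to $2$ are therefore pairwise non-adjacent, so $\alpha(G)\geq 3$, and $\alpha(2,G)\geq 2$ since $|\pi(G)|=6$. (In fact $\Gamma(G)$ is a star $K_{1,3}$ centred at $2$ together with one more edge, so $t(G)=2$ and $\alpha(G)=4$.) By Lemma~\ref{almost simple} there is a non-abelian simple group $S$ with $S\leq G/K\leq\Aut(S)$, $K$ the maximal normal solvable subgroup of $G$; and by Lemma~\ref{lem:non-sol}, $G$ is non-solvable and, since $|G|_3=3^2\neq 3$, not Frobenius.

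Next I would show $\pi(K)\subseteq\{2,3,5,7\}$, i.e.\ $19,43\notin\pi(K)$, using $|G|=2^9\cdot3^2\cdot5^2\cdot7^6\cdot19\cdot43$. If $43\in\pi(K)$, then since $|G|_{43}=43$ a Sylow $43$-subgroup $P$ of $K$ is cyclic of order $43$; for $p\in\{5,19\}$, if $p\in\pi(K)$ the Hall $\{p,43\}$-subgroup of $K$ has $P$ normal (a Sylow count with $n_{43}\mid |G|_p$, $n_{43}\equiv 1\pmod{43}$) and central (as $\gcd(|G|_p,42)=1$), hence is abelian with an element of order $43p$, while if $p\notin\pi(K)$ the Frattini argument gives $G=K\N_G(P)$, so $\N_G(P)$ has an element $x$ of order $p$ and, as $\gcd(p,42)=1$, $P\langle x\rangle$ is cyclic of order $43p$. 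Either way $5\sim 43$ and $19\sim 43$, contradicting $\deg_G(43)=1$; so $43\notin\pi(K)$. Running the same argument with $19$ replacing $43$ (here $|\Aut(\Zbb_{19})|=18$ and $\gcd(25,18)=\gcd(43,18)=1$, and $19\sim 43$ comes from the Frattini step since $43\notin\pi(K)$) gives $19\notin\pi(K)$.

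It remains to identify $S$. Since $19,43\notin\pi(K)$, both primes divide $|G/K|$, hence $|\Aut(S)|=|S|\cdot|\Out(S)|$; as $\pi(S)\subseteq\pi(G)$ and the order of the outer automorphism group of any simple group whose order divides $|G|$ is not divisible by $43$, we get $43\in\pi(S)$, so $|S|_{43}=43$ and, by Lemma~\ref{out}, $43\nmid|\Out(S)|$. Now \cite[Table~1]{ZAVARNITSIN} together with $|S|\mid|G|$ restricts $S$ to a short list, and the spurious members are removed using the information already obtained: for instance $U_3(7)$ (with $|U_3(7)|=2^7\cdot3\cdot7^3\cdot43$) is excluded because $19\mid|G/K|$ but $19\nmid|\Aut(U_3(7))|$, and a group like $L_2(343)$ is excluded by noting it would force $\{5,7\}\subseteq\pi(K)$, hence (by the Hall/Frattini device) an odd order component $5^2\cdot 7^6$ of $G$, which by Lemma~\ref{lem:william} would have to divide $|L_2(343)|$. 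Hence $S\cong L_3(49)$, and since $|L_3(49)|=|G|$ we conclude $K=1$ and $G\cong L_3(49)$.

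The step I expect to be the crux is the pair ``$43\notin\pi(K)$'' and ``$19\notin\pi(K)$'': the Hall/Frattini device produces an element of order $rp$ only when $p$ is prime to $|\Aut(\Zbb_r)|$ and a Sylow count forces the top Sylow subgroup to be normal, which is exactly why the auxiliary primes must be $5,19$ (and then $5,43$) and not $3$ or $7$; moreover it is this bound on $\pi(K)$ that one needs afterwards to kill the extraneous entries in \cite[Table~1]{ZAVARNITSIN}, so the delicate part is ensuring that after this bound only $L_3(49)$ survives.
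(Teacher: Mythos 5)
Your argument is correct in substance, but it takes a genuinely different route from the paper. The paper never invokes Lemma~\ref{almost simple} here: after observing that $\Gamma(G)$ must be $K_{1,3}$ (centred at $2$) together with one disjoint edge, so that $t(G)=2$ and the graph is triangle-free, it uses Lemma~\ref{lem:non-sol} and Lemma~\ref{lem:william} to obtain a normal series $1\unlhd H\lhd K\unlhd G$ with $H$ nilpotent and $K/H$ simple, lists \emph{all} candidates from \cite[Table~1]{ZAVARNITSIN} whose order divides $|G|$ (Table~\ref{tbl:49}), and eliminates every candidate except $L_3(49)$ by one uniform mechanism: since $|G/H|$ divides $|\Aut(K/H)|$, three primes of $|G|$ are forced into the nilpotent group $H$, giving a triangle in $\Gamma(G)$, which is impossible. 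Your route (Vasil'ev--Gorshkov plus the Hall/Frattini device of Propositions~\ref{prop:1}--\ref{prop:5} to show $19,43\notin\pi(K)$, hence $43\in\pi(S)$) buys a much shorter list of candidates, namely $U_3(7)$, $L_2(7^3)$ and $L_3(49)$, at the price of ad hoc exclusions; your treatment of $U_3(7)$ via $19\mid |G/K|$ but $19\nmid|\Aut(U_3(7))|$ is fine, and your Sylow-counting justification that the relevant Hall subgroups are abelian is if anything more careful than the corresponding steps elsewhere in the paper.

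The one soft spot is the exclusion of $L_2(7^3)$. The deduction $\{5,7\}\subseteq\pi(K)$, hence $5\sim 7$, hence (by the degree pattern) $\{5,7\}$ is a component and $5^2\cdot 7^6$ is an odd order component of $G$, is correct; but the final step, that this number ``would have to divide $|L_2(343)|$'', silently identifies the simple section occurring in Lemma~\ref{lem:william} with the socle $S$ of $G/K$. That identification is true (the nilpotent term lies in $K$, so the simple section embeds as a nontrivial normal subgroup of the almost simple group $G/K$ and must equal $S$), but it needs to be said, or bypassed: for instance, Lemma~\ref{lem:william} alone gives that $5^2\cdot 7^6$ is an odd order component of some simple group whose order divides $|G|$, and by Table~\ref{tbl:49} only $L_3(49)$ has order divisible by $7^6$, whose odd order component is $19\cdot 43$, a contradiction; alternatively, $L_2(7^3)$ has elements of order $171=3^2\cdot 19$, so $3\sim 19$ in $\Gamma(G)$, and two edges among the odd primes already violate $\D(G)=(3,1,1,1,1,1)$. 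With that step tightened, your proof is complete.
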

\begin{proof}
According to Table~\ref{tbl:simple}, $|G|=2^9\cdot 3^2\cdot 5^2\cdot 7^6\cdot 19\cdot 43$ and $\D(G)=(3,1,1,1,1,1)$. Then we observe that $\Gamma(G)$ is the graph as in Figure \ref{fig:49} in which $\{a, b, c, d, e\} = \{3, 5, 7, 19, 43\}$.
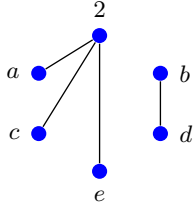
\begin{figure}[h]
  \centering
  \caption{Possibilities for the prime graph of $G$ in Proposition~\ref{prop:6}.}\label{fig:49}
 \begin{tikzpicture}[scale=1]
    \node[vertex] (1) [label=above:{\scriptsize $2$}] at (0,.9){};
    \node[vertex] (2) [label=left :{\scriptsize $a$}] at (-.8,0.4){};
    \node[vertex] (5) [label=right:{\scriptsize $b$}] at (.8,0.4){};
    \node[vertex] (3) [label=left :{\scriptsize $c$}] at (-.8,-.4){};
    \node[vertex] (6) [label=right:{\scriptsize $d$}] at (0.8,-0.4){};
    \node[vertex] (4) [label=below :{\scriptsize $e$}] at (0,-.9){};
    \path[edge style]
    (1) edge [] node [] {} (2)
        edge [] node [] {} (3)
        edge [] node [] {} (4)
    (5) edge [] node [] {} (6);
 \end{tikzpicture}
\end{figure}
We also observe that $t(G)=2$ and $\{a,b,e\}$ is an independent set. Thus $\alpha(G)\geq 3$. It is also easily seen that $\alpha(2, G)\geq 2$. Then by Lemma~\ref{lem:non-sol}, $G$ is neither Frobenius, nor $2$-Frobenius, and so Lemma~\ref{lem:william} implies that $G$ has a normal series $1\unlhd H\lhd K\unlhd G$ such that $K/H$ is a non-Abelian finite simple group. Since $|K/H|$ divides $|K|$, it divides $|G|$, and so by \cite[Table 1]{ZAVARNITSIN}, the factor group $K/H$ is isomorphic to one of the simple groups $S$ as in the first column of Table \ref{tbl:49} below.
\begin{table}[h]
 \centering
 \caption{Non-Abelian finite  simple groups $S$ whose order divides $|L_3(49)|$}\label{tbl:49}
 \small
\begin{tabular}{llcl}
 \multicolumn{1}{l}{$S$} & \multicolumn{1}{l}{$|S|$} & \multicolumn{1}{c}{$|\Out(S)|$} & \multicolumn{1}{l}{Primes in $\pi(H)$}\\ \hline
 $L_2(4)$ & $2^2\cdot 3\cdot 5$ & $2$& $7$, $19$, $43$ \\
 $L_2(9)$ & $2^3\cdot 3^2\cdot 5$ & $4$& $7$, $19$, $43$\\
 $L_2(7)$ & $2^3\cdot 3\cdot 7$ & $2$& $7$, $19$, $43$\\
 $L_2(8)$ & $2^3\cdot 3^2\cdot 7$ & $3$& $7$, $19$, $43$\\
 $A_7$ & $2^3\cdot 3^2\cdot 5\cdot 7$ & $2$& $7$, $19$, $43$\\
 $L_2(49)$ & $2^4\cdot 3\cdot 5^2\cdot 7^2$ & $4$& $7$, $19$, $43$\\
 $L_3(4)$ & $2^6\cdot 3^2\cdot 5\cdot 7$ & $12$& $7$, $19$, $43$\\
 $L_4(2)$ & $2^6\cdot 3^2\cdot 5\cdot 7$ & $2$& $7$, $19$, $43$\\
 $S_4(7)$ & $2^8\cdot 3^2\cdot 5^2\cdot 7^4$ & $2$& $7$, $19$, $43$\\
 $L_2(19)$ & $2^2\cdot 3^2\cdot 5\cdot 19$ & $2$& $2$, $7$, $43$\\
 $L_3(7)$ & $2^5\cdot 3^2\cdot 7^3\cdot 19$ & $6$& $2$, $7$, $43$\\
 $U_3(7)$ & $2^7\cdot 3\cdot 7^3\cdot 43$ & $2$& $2$, $5$, $7$\\
 $L_2(7^3)$ & $2^3\cdot 3^2\cdot 7^3\cdot 19\cdot 43$ & $6$& $2$, $5$, $7$\\
 $L_3(49)$ & $2^9\cdot 3^2\cdot 5^2\cdot 7^6\cdot 19\cdot 43$ & $12$& - \\
\end{tabular}
\end{table}

If $K/H$ is isomorphic to one of the groups $S$ listed in the first column of Table~\ref{tbl:49} except $L_{3}(49)$, then $\pi(H)$ consists of three primes as in the third column of the same table. Since $H$ is nilpotent, it follows that the prime graph of $G$ has a triangle, which is a contradiction. Therefore, $K/H\cong L_3(49)$. As $L_3(49)\leq G/H\leq \Aut(L_3(49))$ and $|G|=|L_3(49)|$, we conclude that $|H|=1$, and hence $G$  is isomorphic to $L_3(49)$.
\end{proof}

\begin{proposition}\label{prop:7}
If $|G| =|L_{3}(53)|$ and $\D(G)=\D(L_3(53))$, then $G\cong L_3(53)$.
\end{proposition}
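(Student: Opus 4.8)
The plan is to follow verbatim the template already used for $L_3(23)$, $L_3(29)$, $L_3(37)$ and $L_3(47)$. By Table~\ref{tbl:simple} we have $|G|=2^5\cdot 3^3\cdot 7\cdot 13^2\cdot 53^3\cdot 409$ and $\D(G)=(3,2,1,3,2,1)$, so $\pi(G)=\{2,3,7,13,53,409\}$ with $7$ and $409$ of degree $1$ and $3$, $53$ of degree $2$. First I would apply Lemma~\ref{lem:alph3}(a): since $|\pi(G)|=6$ and $\Gamma(G)$ has two vertices of degree $1$ and two of degree $2$, we get $\alpha(G)\geq 3$; moreover $\deg(2)=3<|\pi(G)|-1$, so some vertex is non-adjacent to $2$ and hence $\alpha(2,G)\geq 2$. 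Lemma~\ref{almost simple} then yields a non-abelian finite simple group $S$ with $S\leq G/K\leq \Aut(S)$, where $K$ is the maximal normal solvable subgroup of $G$.

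The heart of the argument is to show $409\notin\pi(K)$. Assume not; then a Sylow $409$-subgroup $P$ of $K$ has order $409$. For each $p\in\{7,13\}$ I would produce in $G$ an element of order $409p$: if $p\in\pi(K)$, the solvable group $K$ contains a Hall $\{p,409\}$-subgroup of order $p^b\cdot 409$, and since $p^b\leq 13^2<409$ the Sylow $409$-subgroup is normal there, while $p\nmid 408=|\Aut(C_{409})|$ forces the $p$-part to centralise it, giving an element of order $409p$; if $p\notin\pi(K)$, the Frattini argument $G=K\N_G(P)$ supplies $x\in\N_G(P)$ of order $p$ (as $p\mid|G/K|$), and again $p\nmid 408$ makes $P\langle x\rangle$ cyclic of order $409p$. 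Hence $7\sim 409$ and $13\sim 409$, so $\deg(409)\geq 2$, contradicting $\D(G)$. Therefore $\pi(K)\subseteq\{2,3,7,13,53\}$.

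Since $409$ is the largest prime divisor of $|S|$ and $|G|_{409}=409$ (so $|S|_{409}=409$), Lemma~\ref{out} gives $409\nmid|\Out(S)|$; together with $409\notin\pi(K)$ and $\pi(G)\subseteq\pi(K)\cup\pi(S)\cup\pi(\Out(S))$, this forces $409\in\pi(S)$. Then I would invoke \cite[Table 1]{ZAVARNITSIN} to list the non-abelian finite simple groups whose order is divisible by $409$ and divides $|G|$; I expect $L_3(53)$ to be the unique such group, so that $L_3(53)\leq G/K\leq\Aut(L_3(53))$, and comparing orders $|G|=|L_3(53)|$ forces $K=1$ and $G\cong L_3(53)$. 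The only step that could require genuine work is this final table lookup: if $409$ admits a coincidental companion group (as $19$ does for $L_3(11)$, where $J_1$ appears), it must be ruled out by a degree count — determining which primes are then forced into $\pi(K)$ and exhibiting either an extra edge at a degree-$1$ vertex or an impossibly small $\deg(2)$ — exactly as in the proof of Proposition~\ref{prop:1}. I do not anticipate such a companion for the prime $409$, so I expect the argument to be clean.
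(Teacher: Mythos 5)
Your proposal is correct and follows essentially the same route as the paper's own proof: Lemma~\ref{lem:alph3} and Lemma~\ref{almost simple} to get the almost simple quotient $G/K$, the Hall-subgroup/Frattini argument to exclude $409$ from $\pi(K)$ (your extra justification via $7,13\nmid 408$ only makes the paper's terse ``abelian Hall subgroup'' step more explicit), Lemma~\ref{out} plus \cite[Table 1]{ZAVARNITSIN} to force $S\cong L_3(53)$, and the order count to get $K=1$. The table lookup you hedge on indeed yields $L_3(53)$ as the unique candidate, so no companion group needs to be ruled out.
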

\begin{proof}
By Table~\ref{tbl:simple}, $|G|=2^5\cdot 3^3\cdot 7\cdot 13^2\cdot 53^3\cdot 409$ and $\D(G)=(3,2,1,3,2,1)$. Now by applying Lemma~\ref{lem:alph3}, we must have  $\alpha(G)\geq3$. Furthermore, $\alpha(2,G)\geq2$ since $\deg(2)=3$ and $|\pi(G)|=6$. So by Lemma \ref{almost simple}, there is a non-Abelian finite  simple group $S$ such that $S\leq G/K\leq \Aut(S)$, where $K$ is a maximal normal solvable subgroup of $G$.  We claim that $\pi(K)$ does not contain $409$. Assume the contrary.  Then $409 \in\pi(K)$. We show that $p$ is adjacent to $409$, where $(p,a)\in\{(7,1),(13,1),(13,2)\}$. If $p\in\pi(K)$, then $K$ contains an abelain Hall subgroup of order $p^{a}\cdot 409$, so $p$ and $409$ are adjacent. If $p\not \in \pi(K)$, then we apply Frattini argument and have that $G=K\N_G(P)$, where $P$ is a Sylow $409$-subgroup of $K$, and so $\N_G(P)$ has an element $x$ of order $p$. Now $P\langle x \rangle$ is a cyclic subgroup of $G$ of order $p\cdot 409$ concluding that $p$ and $409$ are adjacent. Thus both $7$ and $13$ are adjacent to $409$ in $\Gamma(G)$, and hence the degree of $409$ is at least $2$, which is a contradiction. Therefore, $409\notin\pi(K)$, and hence it follows from Lemma \ref{out} that $409\notin\pi(\Out(S))$. Then $409\notin\pi(K)\cup\pi(\Out(S))$, and so $409\in\pi(S)$. Therefore by \cite[Table 1]{ZAVARNITSIN}, $S$ is isomorphic to $L_3(53)$ and $L_3(53)\leq G/K\leq \Aut(L_3(53))$. Moreover, since $|G|=|L_3(53)|$, it follows that $K=1$, and hence $G\cong L_3(53)$.
\end{proof}

\begin{proposition}\label{prop:8}
If $|G| =|L_{3}(61)|$ and $\D(G)=\D(L_3(61))$, then $G\cong L_3(61)$.
\end{proposition}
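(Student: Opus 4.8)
The plan is to run the same argument used for Propositions~\ref{prop:1}--\ref{prop:5} and \ref{prop:7}. By Table~\ref{tbl:simple}, $|G|=2^5\cdot 3^2\cdot 5^2\cdot 13\cdot 31\cdot 61^3\cdot 97$ and $\D(G)=(4,2,4,1,2,2,1)$, so $\pi(G)=\{2,3,5,13,31,61,97\}$. Reading the adjacencies off $\mu(S)$, one finds that $\Gamma(G)$ has exactly two connected components, $\{2,3,5,31,61\}$ and $\{13,97\}$, the two vertices of degree $1$ being $13$ and $97$. Since Lemma~\ref{lem:alph3} does not apply here (no vertex has degree $3$), I would argue $\alpha(G)\geq 3$ directly: $13\sim 97$ and neither is adjacent to any of $2,3,5,31,61$, while among these five vertices $3$ and $31$ are non-adjacent, so $\{3,31,13\}$ is independent; likewise $\{2,13\}$ is independent, so $\alpha(2,G)\geq 2$. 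Lemma~\ref{almost simple} then yields a non-abelian finite simple group $S$ with $S\leq G/K\leq\Aut(S)$, where $K$ is the maximal normal solvable subgroup of $G$.

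Next I would show $97\notin\pi(K)$. Suppose otherwise and take $p\in\{5,31\}$. If $p\in\pi(K)$, then the solvable group $K$ has a Hall $\{p,97\}$-subgroup of order $p^{a}\cdot 97$ with $(p,a)\in\{(5,1),(5,2),(31,1)\}$; a Sylow count shows both Sylow subgroups are normal in it (because $97\not\equiv 1\pmod p$ forces $n_p=1$, while no nontrivial divisor of $p^{a}$ is $\equiv 1\pmod{97}$, forcing $n_{97}=1$), so this Hall subgroup is abelian and contains an element of order $p\cdot 97$. If $p\notin\pi(K)$, apply the Frattini argument to $P\in\Syl_{97}(K)$ (so $|P|=97$ and $G=K\N_G(P)$); then $p$ divides $|G:K|$, hence $p\mid|\N_G(P)|$, and for $x\in\N_G(P)$ of order $p$ the group $P\langle x\rangle$ is cyclic of order $p\cdot 97$ since $p\not\equiv 1\pmod{97}$ and $97\not\equiv 1\pmod p$. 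Either way $p\sim 97$; thus $5\sim 97$ and $31\sim 97$, forcing $\deg_G(97)\geq 2$, contrary to $\deg_G(97)=1$. Hence $97\notin\pi(K)$, so $\pi(K)\subseteq\{2,3,5,13,31,61\}$.

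To locate $S$: since $97$ is the largest prime in $\pi(G)$ and $|G|_{97}=97$, Lemma~\ref{out} gives $97\notin\pi(\Out(S))$; combined with $97\notin\pi(K)$ and $\pi(G)=\pi(K)\cup\pi(G/K)$ with $\pi(G/K)\subseteq\pi(S)\cup\pi(\Out(S))$, this forces $97\in\pi(S)$. As $|S|$ divides $|G|$ and $|S|_{97}=97$, I expect \cite[Table 1]{ZAVARNITSIN} to give $S\cong L_3(61)$ directly, as in Propositions~\ref{prop:2}--\ref{prop:5} and \ref{prop:7} (the only $L_3(q)$ with $97\mid(q^2+q+1)/(3,q-1)$ and order dividing $|G|$ is $q=61$); should the table list further candidates with $97\in\pi(S)$, they are ruled out by checking their orders against $|G|$, exactly as $J_1$ was handled in Proposition~\ref{prop:1}. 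Finally, from $L_3(61)\leq G/K\leq\Aut(L_3(61))$ and $|G|=|L_3(61)|$ we get $K=1$, whence $G\cong L_3(61)$.

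The step I expect to be the main obstacle is the adjacency argument showing $97\notin\pi(K)$. Unlike the earlier propositions, one cannot use the prime $61$ here: because $|G|_{61}=61^{3}$ and $61^{3}\equiv 1\pmod{97}$, a Hall $\{61,97\}$-subgroup of $K$ of order $61^{3}\cdot 97$ need not be abelian (it could be a Frobenius group with kernel of order $61^{3}$), and then it would carry no element of order $61\cdot 97$. So the argument must be restricted to $p\in\{5,31\}$, and the crux is verifying the congruences $5\not\equiv 1$, $25\not\equiv 1$, $31\not\equiv 1\pmod{97}$ together with $97\not\equiv 1\pmod 5$ and $97\not\equiv 1\pmod{31}$, which force the required cyclic (abelian) structure in every case.
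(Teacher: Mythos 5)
Your overall strategy is the paper's: get $\alpha(G)\geq 3$ and $\alpha(2,G)\geq 2$, invoke Lemma~\ref{almost simple}, rule $97$ out of $\pi(K)$ via Hall subgroups/Frattini (the paper uses $p\in\{13,31\}$, you use $\{5,31\}$ — both work, and your congruence checks are correct), then use Lemma~\ref{out} and \cite[Table 1]{ZAVARNITSIN} to force $S\cong L_3(61)$ and $K=1$. The step you flag as the "main obstacle" is in fact fine; the genuine gap is earlier, where you claim to read $\Gamma(G)$ off $\mu(S)$. The hypothesis only gives $|G|=|L_3(61)|$ and $\D(G)=\D(L_3(61))$; it does \emph{not} give $\Gamma(G)=\Gamma(L_3(61))$, and indeed the degree pattern $(4,2,4,1,2,2,1)$ on $\{2,3,5,13,31,61,97\}$ is realized by graphs other than $\Gamma(L_3(61))$. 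For instance, the graph with edge set $\{2\sim 13,\ 5\sim 97,\ 3\sim 31,\ 2\sim 3,\ 5\sim 31,\ 2\sim 61,\ 5\sim 61,\ 2\sim 5\}$ has exactly this degree pattern, is connected, and in it your sets $\{3,31,13\}$ and $\{2,13\}$ are not independent. So your justification of $\alpha(G)\geq 3$ and $\alpha(2,G)\geq 2$, as written, does not follow from the hypotheses; this is precisely the distinction the OD-characterization problem turns on.

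The conclusions themselves are still forced by the degree pattern, and the repair is the argument behind Lemma~\ref{lem:alph3} (which the paper cites here, reading its hypothesis as vertices of degree at most $3$): there are two vertices of degree $1$, namely $13$ and $97$. If $13\sim 97$, the remaining five vertices are non-adjacent to both; since $3$ has degree $2$, it has a non-neighbour $r$ among them, and $\{13,3,r\}$ is independent. If $13\not\sim 97$, each has a unique neighbour, so some vertex $r$ is adjacent to neither and $\{13,97,r\}$ is independent. Either way $\alpha(G)\geq 3$, and since $\deg(2)=4<6=|\pi(G)|-1$, the vertex $2$ has a non-neighbour, so $\alpha(2,G)\geq 2$. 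With that substitution (and noting that your exclusion of $97$ from $\pi(K)$ only uses $\deg_G(97)=1$, which is given by the degree pattern), the rest of your proof goes through as in the paper.
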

\begin{proof}
It follows from Table ~\ref{tbl:simple} that $|G|=2^5\cdot 3^2\cdot 5^2\cdot 13\cdot 31\cdot 61^3\cdot 97$ and $\D(G)=(4,2,4,1,2,2,1)$. Then by Lemma \ref{lem:alph3},  $\alpha(G)\geq3$. Moreover, $\alpha(2,G)\geq2$ as $\deg(2)=4$ and $|\pi(G)|=7$. By Lemma \ref{almost simple}, there is a non-Abelian finite  simple group $S$ such that $S\leq G/K\leq \Aut(S)$, where $K$ is a maximal normal solvable subgroup of $G$. We show that $97 \notin\pi(K)$. Assume the contrary. Then $97\in\pi(K)$.  Let $p\in\{13,31\}$. If $p\in\pi(K)$, then $K$ contains a cyclic Hall subgroup of order $p\cdot 97$, and so $p$ is adjacent to $97$. If $p\not \in \pi(K)$, then by Frattini argument $G=K\N_G(P)$, where $P$ is a Sylow $97$-subgroup of $K$, and so $\N_G(P)$ has an element $x$ of order $p$. Now $P\langle x \rangle$ is a cyclic subgroup of $G$ of order $p\cdot 97$ which implies that $p$ is adjacent to $97$. Therefore, $97$ is adjacent to $13$ and $31$ which is a contraiction as $97$ is of degree $1$. Thus $97 \notin\pi(K)$. Now by Lemma \ref{out}, $97\notin\pi(\Out(S))$, then $97\notin\pi(K)\cup\pi(\Out(S))$, and so $97\in\pi(S)$. Therefore by \cite[Table 1]{ZAVARNITSIN}, $S$ is isomorphic to $L_3(61)$ and $L_3(61)\leq G/K\leq \Aut(L_3(61))$. Moreover, since $|G|=|L_3(61)|$, we have that $K=1$, and hence  $G$ is isomorphic to $L_3(61)$.
\end{proof}

\begin{proposition}\label{prop:9}
If $|G| =|L_{3}(67)|$ and $\D(G)=\D(L_3(67))$, then $G\cong L_3(67)$.
\end{proposition}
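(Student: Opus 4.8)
The plan is to follow the template of Propositions~\ref{prop:1}--\ref{prop:8}. By Table~\ref{tbl:simple}, $|G|=2^{4}\cdot 3^{2}\cdot 7^{2}\cdot 11^{2}\cdot 17\cdot 31\cdot 67^{3}$ and $\D(G)=(4,2,1,4,2,1,2)$ on $\pi(G)=\{2,3,7,11,17,31,67\}$; in particular $\deg_{G}(7)=\deg_{G}(31)=1$ and $\deg_{G}(2)=4$. First I would note that $\Gamma(G)$ has $8$ edges (half of $4+2+1+4+2+1+2=16$), so its complement has $21-8=13$ edges; since a triangle-free graph on $7$ vertices has at most $\lfloor 7^{2}/4\rfloor=12$ edges, the complement contains a triangle, whence $\alpha(G)\geq 3$ (this is the role played by Lemma~\ref{lem:alph3} in the earlier propositions). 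Moreover $\alpha(2,G)\geq 2$, since $2$ is non-adjacent to $|\pi(G)|-1-\deg_{G}(2)=2$ of the remaining vertices. Then Lemma~\ref{almost simple} supplies a non-Abelian finite simple group $S$ with $S\leq G/K\leq\Aut(S)$, where $K$ is the maximal normal solvable subgroup of $G$.

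The heart of the proof is to show $31\notin\pi(K)$. Here $31$, rather than the largest prime $67$, is the prime to exploit: $|G|_{67}=67^{3}$ but $|G|_{31}=31$, so a Sylow $31$-subgroup is cyclic of order $31$, $|\Aut(C_{31})|=30$, and $(p,30)=1$ for $p\in\{7,11,17,67\}$. Assume $31\in\pi(K)$. For $(p,a)\in\{(11,1),(11,2),(17,1)\}$ I would show that $p$ is adjacent to $31$ in $\Gamma(G)$. If $p\in\pi(K)$, then the solvable group $K$ has a Hall $\{p,31\}$-subgroup $L$ of order $p^{a}\cdot 31$; its Sylow $31$-subgroup $Q$ is normal in $L$ (the number of such subgroups divides $p^{a}$ and is $\equiv 1\pmod{31}$, hence equals $1$) and central in $L$ (the conjugation action of $L$ on $Q$ factors through $\Aut(Q)\cong C_{30}$, so is trivial as $(|L|,30)=1$); since also $a\leq 2$ forces the Sylow $p$-subgroup of $L$ to be Abelian, $L$ is Abelian and $G$ has an element of order $31p$. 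If $p\notin\pi(K)$, then the Frattini argument gives $G=K\N_{G}(P)$ with $P\in\Syl_{31}(K)$; as $|G:K|$ divides $|\N_{G}(P)|$ and $p\mid|G:K|$, there is an element $x\in\N_{G}(P)$ of order $p$, and $P\langle x\rangle$ is cyclic of order $31p$ because $x$ acts trivially on $P\cong C_{31}$. Either way $31$ is adjacent to both $11$ and $17$, so $\deg_{G}(31)\geq 2$, a contradiction. Hence $31\notin\pi(K)$, and $\pi(K)\subseteq\{2,3,7,11,17,67\}$.

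To finish, Lemma~\ref{out} gives $31\notin\pi(\Out(S))$; combined with $31\in\pi(G)=\pi(K)\cup\pi(G/K)$ and $\pi(G/K)\subseteq\pi(S)\cup\pi(\Out(S))$ this forces $31\in\pi(S)$, and then $|S|_{31}=31$ since $|S|\mid|G|$. Now \cite[Table 1]{ZAVARNITSIN}, together with $|S|\mid|G|$ and $\pi(S)\subseteq\{2,3,7,11,17,31,67\}$, yields $S\cong L_{3}(67)$ (should the table leave another candidate, it is eliminated by an argument on $|K|$ and $\D(G)$ as in the elimination of $J_{1}$ in the proof of Proposition~\ref{prop:1}). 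Finally, from $L_{3}(67)\leq G/K\leq\Aut(L_{3}(67))$ and $|G|=|L_{3}(67)|$ we conclude $K=1$, so $G\cong L_{3}(67)$.

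The step I expect to be the main obstacle is establishing $31\notin\pi(K)$: one must make sure that the Hall/Frattini dichotomy genuinely produces \emph{two} distinct primes adjacent to $31$, which rests on $|G|_{31}=31$ and on $11$ and $17$ being coprime to $30=|\Aut(C_{31})|$. A secondary point is to confirm from \cite{ZAVARNITSIN} that $L_{3}(67)$ is the unique non-Abelian simple group with $31\,\|\,|S|$, $\pi(S)\subseteq\pi(G)$ and $|S|\mid|G|$. The remaining bookkeeping is routine and parallels Propositions~\ref{prop:1}--\ref{prop:8}.
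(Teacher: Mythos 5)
Your overall route is sound, but it is organized differently from the paper's. The paper does not try to show $31\notin\pi(K)$ first: it immediately lists, via \cite[Table 1]{ZAVARNITSIN}, the simple groups whose order divides $|L_3(67)|$, namely $L_2(7)$, $L_2(8)$, $L_2(17)$, $L_2(67)$ and $L_3(67)$, and eliminates the first four by observing that each of them forces $7,11,31\in\pi(K)$; the solvable group $K$ then has Abelian Hall subgroups of orders $7^2\cdot 31$ and $11^2\cdot 31$, so $31$ is adjacent to both $7$ and $11$ and $\deg_G(31)\geq 2$, a contradiction. You instead transplant the Hall/Frattini dichotomy used in Propositions~\ref{prop:1}--\ref{prop:5} to the exactly-dividing isolated prime $31$ (rather than the largest prime $67$, which occurs to the third power), prove $31\notin\pi(K)$ outright, and only then identify $S$. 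Both arguments rest on the same two facts, $\deg_G(31)=1$ and $|G|_{31}=31$, and your version of the independence-number step is actually cleaner than the paper's: Lemma~\ref{lem:alph3}(b) literally requires two vertices of degree $3$, which $\D(G)=(4,2,1,4,2,1,2)$ does not have, whereas your edge count plus the triangle-free bound gives $\alpha(G)\geq 3$ without that hypothesis.

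The one step that does not hold as written is the appeal to Lemma~\ref{out} to get $31\notin\pi(\Out(S))$. That lemma applies to the \emph{largest} prime divisor $p$ of $|S|$ with $|S|_p=p$; here $67$ may divide $|S|$ (and does, for the candidates $L_2(67)$ and $L_3(67)$), so $31$ is not the largest prime divisor of $|S|$ and the lemma says nothing about $31$ and $|\Out(S)|$ -- presumably the reason the paper avoids Lemma~\ref{out} in this proposition. The gap is easily repaired with material you already invoke: since $|S|$ divides $|G|$, the candidates from \cite[Table 1]{ZAVARNITSIN} are exactly $L_2(7)$, $L_2(8)$, $L_2(17)$, $L_2(67)$, $L_3(67)$, whose outer automorphism groups have orders $2,3,2,2,6$; since $31\notin\pi(K)$ gives $31\mid |G/K|$ and $|G/K|$ divides $|S|\cdot|\Out(S)|$, the first four candidates are impossible, so $31\in\pi(S)$ and $S\cong L_3(67)$. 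With that substitution (and the concluding step $K=1$ from $|G|=|L_3(67)|$, which you have), your proof is complete.
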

\begin{proof}
By Table ~\ref{tbl:simple}, we have   $|G|=2^4\cdot 3^2\cdot 7^2\cdot 11^2\cdot 17\cdot 31\cdot 67^3$ and  $\D(G)=(4,2,1,4,2,1,2)$. It follows from Lemma~\ref{lem:alph3} that $\alpha(G)\geq3$. Note that $\deg(2)=4$ and $|\pi(G)|=7$. Then $\alpha(2,G)\geq2$. It follows from Lemma \ref{almost simple} that there is a non-Abelian finite  simple group $S$ such that $S\leq G/K\leq \Aut(S)$, where $K$ is a maximal normal solvable subgroup of $G$. Since $|S|$ divides $|G/K|$, so does $|G|$, and so by \cite[Table 1]{ZAVARNITSIN}, $S$ isomorphic to one of groups in the first column of Table \ref{tbl:67}.

If $S$ is isomorphic to one of the groups  $L_2(7)$, $L_2(8)$, $L_2(17)$ and $L_2(67)$, then $7, 11, 31\in\pi(K)$. As $K$ is solvable, we can consider a Hall $\{7,31\}$-subgroup $K_{1}:=P_7P_{31}$ and a Hall $\{11,31\}$-subgroup $K_{2}:=P_{11}P_{31}$. Then $|K_{1}|=7^2\cdot 31$ and $|K_{2}|=11^2\cdot 31$, and consequently $K_{i}$, for $i=1,2$, is Abelian which implies that $31$ is adjacent to both $7$ and $11$, and so  $\deg(37)\geq 2$, which is a contradiction. Thus $S\cong L_3(67)$. Since $L_3(67)\leq G/K\leq \Aut(L_3(67))$ and $|G|=|L_3(67)|$, we conclude that $|K|=1$, and hence $G$ isomorphic to $L_3(67)$.
\end{proof}

\begin{table}
 \centering
 \caption{Non-Abelian finite  simple groups $S$ whose order divides $|L_3(67)|$}\label{tbl:67}
 \small
\begin{tabular}{llc}
 \multicolumn{1}{l}{$S$} & \multicolumn{1}{l}{$|S|$} & \multicolumn{1}{c}{$|\Out(S)|$} \\ \hline
 $L_2(7)$ & $2^3\cdot 3\cdot 7$ & $2$\\
 $L_2(8)$ & $2^3\cdot 3^2\cdot 7$ & $3$\\
 $L_2(17)$ & $2^4\cdot 3^2\cdot 17$ & $2$\\
 $L_2(67)$ & $2^2\cdot 3\cdot 11\cdot 17\cdot 67$ & $2$\\
 $L_3(67)$ & $2^4\cdot 3^2\cdot 7^2\cdot 11^2\cdot 17\cdot 31\cdot 67^3$ & $6$\\
\end{tabular}
\end{table}

\begin{proposition}\label{prop:10}
If $|G| =|L_{3}(79)|$ and $\D(G)=\D(L_3(79))$, then $G\cong L_3(79)$.
\end{proposition}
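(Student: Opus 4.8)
The plan is to argue as in the previous propositions, but with the prime $43$ playing the role of the ``large'' prime: although $79$ is the largest prime in $\pi(G)$, it divides $|G|=2^{6}\cdot 3^{2}\cdot 5\cdot 7^{2}\cdot 13^{2}\cdot 43\cdot 79^{3}$ to the third power, whereas $43$ divides $|G|$ only once, and this is what makes the usual Frattini/Hall argument available. From $\D(G)=(4,2,2,1,4,1,2)$ one reads off $\pi(G)=\{2,3,5,7,13,43,79\}$, that $7$ and $43$ are the only degree-$1$ vertices of $\Gamma(G)$, and that $2$ and $13$ have degree $4$. First I would record that $\alpha(G)\geq 3$ and $\alpha(2,G)\geq 2$: since $\deg(2)=4<6$, the prime $2$ lies in an independent set of size $2$; and if $7\sim 43$ in $\Gamma(G)$ then $2$ and $13$ are both adjacent to all of $\{3,5,79\}$, so $\{3,5,79\}$ is independent, while if $7\not\sim 43$ then at most two of the remaining five primes are adjacent to $7$ or to $43$, so some prime $w$ is adjacent to neither and $\{7,43,w\}$ is independent. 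Then Lemma~\ref{almost simple} yields a non-abelian simple group $S$ with $S\leq G/K\leq\Aut(S)$, where $K$ is the maximal normal solvable subgroup of $G$.

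The main step is to prove $43\notin\pi(K)$. Assume the contrary. As $43^{2}\nmid|G|$, a Sylow $43$-subgroup $P$ of $K$ is cyclic of order $43$ and $|\Aut(P)|=42=2\cdot 3\cdot 7$. For each $p\in\{5,13\}$ I would show $43\sim p$ in $\Gamma(G)$. If $p\notin\pi(K)$, the Frattini argument gives $G=K\N_{G}(P)$, so $\N_{G}(P)$ contains an element $x$ of order $p$; since $p\nmid 42$, $x$ centralises $P$ and $P\langle x\rangle$ is cyclic of order $43p$. If $p\in\pi(K)$, then $K$, being solvable, has a Hall $\{43,p\}$-subgroup $L$; the number of Sylow $43$-subgroups of $L$ divides $|L|/43\in\{5,13,13^{2}\}$ and is $\equiv 1\pmod{43}$, hence equals $1$, so $P$ is normal in $L$, and again $p\nmid 42$ forces a complement to centralise $P$, producing an element of order $43p$. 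Thus $43$ is adjacent to both $5$ and $13$, so $\deg_{G}(43)\geq 2$, contradicting $\D(G)=\D(S)$, in which $\deg(43)=1$. Hence $43\notin\pi(K)$.

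Consequently $43\in\pi(G/K)$. Since $|S|$ divides $|G|$, the number $|\Out(S)|$ is not divisible by $43$ (a prime as large as $43$ can divide the outer automorphism group only of simple groups far larger than $|G|$; cf.\ Lemma~\ref{out}), so $43\in\pi(S)$. Running through the finite simple groups whose order divides $|G|=2^{6}\cdot 3^{2}\cdot 5\cdot 7^{2}\cdot 13^{2}\cdot 43\cdot 79^{3}$ by means of \cite[Table 1]{ZAVARNITSIN}, the only such group divisible by $43$ is $L_{3}(79)$; hence $S\cong L_{3}(79)$ and $L_{3}(79)\leq G/K\leq\Aut(L_{3}(79))$. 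Finally $|G|=|L_{3}(79)|=|S|\leq|G/K|\leq|G|$ forces $K=1$, whence $G\cong L_{3}(79)$.

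The step I expect to be the main obstacle is $43\notin\pi(K)$, and in particular the choice of partner primes. The tempting move of also forcing $43\sim 79$ fails: the multiplicative order of $79$ modulo $43$ is $3$, so a group of order $43$ can act fixed-point-freely on $(\Zbb/79\Zbb)^{3}$, and $79^{3}$ is the exact power of $79$ dividing $|G|$, so nothing can be extracted from the pair $(43,79)$. The remedy is to use the two partners $5$ and $13$ instead: since $5,13\nmid 42$, any element of order $5$ or $13$ normalising a group of order $43$ must centralise it, and this alone pushes $\deg(43)$ above $1$. A minor additional point of care is that $\D(G)$ does not determine $\Gamma(G)$ (the degree-$1$ vertices $7$ and $43$ need not be adjacent), which is why the independence estimates are obtained by the short case split above rather than by reading off the prime graph of $L_{3}(79)$.
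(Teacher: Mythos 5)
Your proof is correct and follows essentially the same route as the paper: establish $\alpha(G)\geq 3$ and $\alpha(2,G)\geq 2$, invoke Lemma~\ref{almost simple}, exclude $43$ from $\pi(K)$ by the Frattini/Hall argument with partner primes $5$ and $13$, identify $S\cong L_3(79)$ from Zavarnitsine's table, and conclude $K=1$. Two remarks on the details. First, your explicit case split proving $\alpha(G)\geq 3$ is a genuine improvement in rigour: the paper appeals to Lemma~\ref{lem:alph3}(b), which as stated requires two vertices of degree $3$, and $\D(L_3(79))=(4,2,2,1,4,1,2)$ has none; your direct argument (and likewise your Sylow-counting justification that the Hall subgroups of $K$ are abelian) fills in exactly what the paper leaves implicit. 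Second, your justification that $43\nmid|\Out(S)|$ by ``cf.\ Lemma~\ref{out}'' is not literally licensed: that lemma applies only when the prime in question is the largest prime divisor of $|S|$ occurring to the first power, and here $43$ is not known in advance to divide $|S|$ at all, let alone to be its largest prime divisor (e.g.\ $S$ could a priori be $L_2(79)$). This is harmless rather than a gap, because the same table you cite lists the finitely many candidates for $S$ together with $|\Out(S)|\leq 12$ in every case, so $43\nmid|\Out(S)|$ is checked by inspection; this is in effect how the paper argues, observing that any candidate other than $L_3(79)$ in Table~\ref{tbl:79} would force $43\in\pi(K)$, contradicting the first step.
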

\begin{proof}
By Table ~\ref{tbl:simple}, $|G|=2^6\cdot 3^2\cdot 5\cdot 7^2\cdot 13^2\cdot 43\cdot 79^3$ and $\D(G)=(4,2,2,1,4,1,2)$. Then by applying Lemma~\ref{lem:alph3}, we conclude that $\alpha(G)\geq3$. Furthermore, $\alpha(2,G)\geq2$ since $\deg(2)=4$ and $|\pi(G)|=7$. By Lemma \ref{almost simple}, there is a non-Abelian finite  simple group $S$ such that $S\leq G/K\leq \Aut(S)$, where $K$ is a maximal normal solvable subgroup of $G$. We show that $\pi(K)$ does not contain $43$. Assume the contrary. Then $43\in\pi(K)$. Let $(p,a)\in\{(5,1),(13,1),(13,2)\}$. If $p\in\pi(K)$, then $K$ contains an abelian Hall subgroup of order $p\cdot 43$, and so $p$ and $43$ are adjacent. If $p\not \in \pi(K)$, then by Frattini argument $G=K\N_G(P)$, where $P$ is a Sylow $43$-subgroup of $K$. This shows that $\N_G(P)$ contains an element $x$ of order $p$, and so $P\langle x \rangle$ is a cyclic subgroup of $G$ of order $p\cdot 43$ concluding that $p$ is adjacent to $43$. Therefore, $5$ and $13$ are adjacent to $43$, and so $43$ has degree at least $2$, which is a contradiction. Therefore, $43\notin \pi(K)$. We prove that $S\cong L_3(79)$. Since $|S|$ divides $G/K$, so does $|G|$, and so by \cite[Table 1]{ZAVARNITSIN}, $S$ isomorphic to one of groups in Table \ref{tbl:79} below.
\begin{table}
 \centering
 \caption{Non-Abelian finite  simple groups $S$ whose order divides $|L_3(79)|$.}\label{tbl:79}
 \small
\begin{tabular}{llc}
  \multicolumn{1}{l}{$S$} & \multicolumn{1}{l}{$|S|$} & \multicolumn{1}{c}{$|\Out(S)|$} \\ \hline
  $L_2(4)$ & $2^2\cdot 3\cdot 5$ & $2$\\
  $L_2(9)$ & $2^3\cdot 3^2\cdot 5$ & $4$\\
  $L_2(7)$ & $2^3\cdot 3\cdot 7$ & $2$\\
  $L_2(8)$ & $2^3\cdot 3^2\cdot 7$ & $3$\\
  $A_7$ & $2^3\cdot 3^2\cdot 5\cdot 7$ & $2$\\
  $L_3(4)$ & $2^6\cdot 3^2\cdot 5\cdot 7$ & $12$\\
  $L_4(2)$ & $2^6\cdot 3^2\cdot 5\cdot 7$ & $2$\\
  $L_2(13)$ & $2^2\cdot 3\cdot 7\cdot 13$ & $2$\\
  $Sz(8)$ & $2^6\cdot 5\cdot 7\cdot 13$ & $3$\\
  $L_2(64)$ & $2^6\cdot 3^2\cdot 5\cdot 7\cdot 13$ & $6$\\
  $L_2(79)$ & $2^4\cdot 3\cdot 5\cdot 7\cdot 13\cdot 79$ & $2$\\
  $L_3(79)$ & $2^6\cdot 3^2\cdot 5\cdot 7^2\cdot 13^2\cdot 43\cdot 79^3$ & $6$\\
\end{tabular}
\end{table}

If $S$ is isomorphic to a simple group listed in the first column of Table \ref{tbl:79} except $L_3(79)$, then $43\in K$, which is a contradiction. Therefore, $S\cong L_3(79)$ and $L_3(79)\leq G/K\leq \Aut(L_3(79))$. Moreover, since $|G|=|L_3(79)|$, it follows that $|K|=1$, and hence $G\cong L_3(79)$.
\end{proof}

\begin{proposition}\label{prop:11}
If $|G| =|L_{3}(81)|$ and $\D(G)=\D(L_3(81))$, then $G\cong L_3(81)$.
\end{proposition}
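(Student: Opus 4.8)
The plan is to follow the strategy of Proposition~\ref{prop:6}. From Table~\ref{tbl:simple} we have $|G|=2^9\cdot 3^{12}\cdot 5^2\cdot 7\cdot 13\cdot 41\cdot 73$ and $\D(G)=(3,2,3,2,2,2,2)$. Reading off $\mu(S)$ with $q=81$ (so $(3,q-1)=1$) together with the degree pattern, one checks that the only edges of $\Gamma(G)$ are those within $\{2,3,5,41\}$ (where $\{2,3,5\}$ and $\{2,5,41\}$ are triangles) and those within $\{7,13,73\}$ (a triangle); hence $\Gamma(G)$ has exactly two connected components $\pi_1=\pi_1(G)=\{2,3,5,41\}$ and $\pi_2=\pi_2(G)=\{7,13,73\}$, so $t(G)=2$ and the only odd order component of $G$ is $7\cdot 13\cdot 73=6643$. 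First I would note that $\{3,7,41\}$ is an independent set of $\Gamma(G)$ (in $\Gamma(G)$ the vertices $3$ and $41$ are adjacent only to $2$ and $5$, while $7$ is adjacent only to $13$ and $73$), so $\alpha(G)\geq 3$, and that $\{2,7\}$ is independent, so $\alpha(2,G)\geq 2$.

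Since $|G|$ is even and $\alpha(G)\geq 3$, Lemma~\ref{lem:non-sol} gives that $G$ is non-solvable, hence not a $2$-Frobenius group, and since $|G|_3=3^{12}\neq 3$ it is not a Frobenius group either. Then Lemma~\ref{lem:william} supplies a normal series $1\unlhd H\lhd K\unlhd G$ in which $H$ is a nilpotent $\pi_1$-group, $G/K$ is a $\pi_1$-group, $K/H$ is a non-abelian finite simple group with $|G/H|\mid|\Aut(K/H)|$, and every odd order component of $G$ is an odd order component of $K/H$; in particular $7\cdot 13\cdot 73$ is an odd order component of $K/H$.

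It remains to identify $K/H$, and this is the step I expect to be the main obstacle. Since $|K/H|$ divides $|G|$, by \cite[Table 1]{ZAVARNITSIN} the group $K/H$ belongs to an explicit finite list of simple groups --- those whose order divides $2^9\cdot 3^{12}\cdot 5^2\cdot 7\cdot 13\cdot 41\cdot 73$ --- which I would assemble into a table in the style of Tables~\ref{tbl:49} and \ref{tbl:79}. The list is comparatively long (many $L_2(q)$, several $A_n$, and a handful of small classical groups), and the work is to verify that $L_3(81)$ is the only entry for which $7\cdot 13\cdot 73$ is an odd order component. For every entry with $73\nmid|S|$ this is immediate; the only entries with $73\mid|S|$ and $|S|\mid|G|$ are $U_3(9)$, $L_2(729)$ and $L_3(81)$, and here $7,13\notin\pi(U_3(9))$, while in $\Gamma(L_2(729))$ the primes $7$ and $13$ lie in the same component as $2$, so $7\cdot 13\cdot 73$ is not an odd order component there. (Equivalently, one rules these two out directly: $\pi(H)\subseteq\pi_1$ and $H$ nilpotent force $\pi(H)$ to induce a clique of $\Gamma(G)$ contained in $\{2,3,5,41\}$, whereas $|G/K|\mid|\Out(K/H)|$ together with $|S|\mid|G|$ forces $\pi(H)$ to contain $7$ when $S=U_3(9)$ --- impossible since $7\notin\pi_1$ --- and to contain the non-adjacent pair $\{3,41\}$ when $S=L_2(729)$, again a contradiction.) Therefore $K/H\cong L_3(81)$, and since $L_3(81)\leq G/H\leq\Aut(L_3(81))$ with $|G|=|L_3(81)|$, we get $H=1$ and $G\cong L_3(81)$.
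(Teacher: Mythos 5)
Your argument breaks down at the very first step: from $|G|=|L_3(81)|$ and $\D(G)=\D(L_3(81))$ you may not conclude that $\Gamma(G)=\Gamma(L_3(81))$. The hypothesis gives only the degree sequence $(3,2,3,2,2,2,2)$ of $\Gamma(G)$; the set $\mu(L_3(81))$ describes the prime graph of $L_3(81)$, not of $G$. Consequently your claims that $\Gamma(G)$ has the two components $\{2,3,5,41\}$ and $\{7,13,73\}$, that $t(G)=2$, and that $7\cdot13\cdot73$ is an odd order component of $G$ are all unjustified. They can in fact fail for graphs with this degree sequence: among the six admissible graphs listed in Figure~\ref{fig:81} the first one is \emph{connected} (a $6$-cycle through $2,5,c,e,d,b$ with the seventh vertex $a$ joined to $2$ and $5$), so a priori $t(G)$ could equal $1$. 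Then Lemma~\ref{lem:william} is not applicable at all, and there is no odd order component $7\cdot13\cdot73$ to match against $K/H$; this is precisely why the paper does not imitate Proposition~\ref{prop:6} (where the pattern $(3,1,1,1,1,1)$ does force a disconnected graph) but instead verifies $\alpha(G)\geq3$ and $\alpha(2,G)\geq2$ for every graph in Figure~\ref{fig:81} and applies Lemma~\ref{almost simple}, getting $S\leq G/K\leq\Aut(S)$ with $K$ the maximal normal solvable subgroup, then excludes $73$ from $\pi(K)$ by the Frattini/Hall argument and identifies $S$ from Zavarnitsine's list (Table~\ref{tbl:81}). Note that your independence claims suffer from the same defect: $\{3,7,41\}$ and $\{2,7\}$ being independent presupposes the graph; the true statements $\alpha(G)\geq3$ and $\alpha(2,G)\geq2$ must be derived from the degree pattern alone (e.g.\ $\deg(2)=3<6$ gives the latter, and the former follows since $\Gamma(G)$ has $8$ edges, so its complement has $13>\lfloor 7^2/4\rfloor$ edges and hence contains a triangle, or by the paper's case analysis).

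A secondary but concrete omission: even granting your two-component framework, your list of simple groups $S$ with $73\mid|S|$ and $|S|\mid|G|$ misses $G_2(9)$, whose order $2^8\cdot3^{12}\cdot5^2\cdot7\cdot13\cdot73$ divides $|G|$ and which appears in Table~\ref{tbl:81}. It could be eliminated by arguments of the type you use (its odd order components are $73$ and $7\cdot13$, and $41$ would be forced into $\pi(K)$, or into $\pi(H)\cup\pi(G/K)$ in your setup), but as written your case analysis is incomplete; together with the unjustified assumption $t(G)=2$, this leaves the proof with genuine gaps.
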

\begin{proof}
According to Table ~\ref{tbl:simple}, $|G|=2^9\cdot 3^{12}\cdot 5^2\cdot 7\cdot 13\cdot 41\cdot 73$ and $\D(G)=(3,2,3,2,2,2,2)$. Then the only possible graphs for $\Gamma(G)$ are as in Figure~\ref{fig:81}.
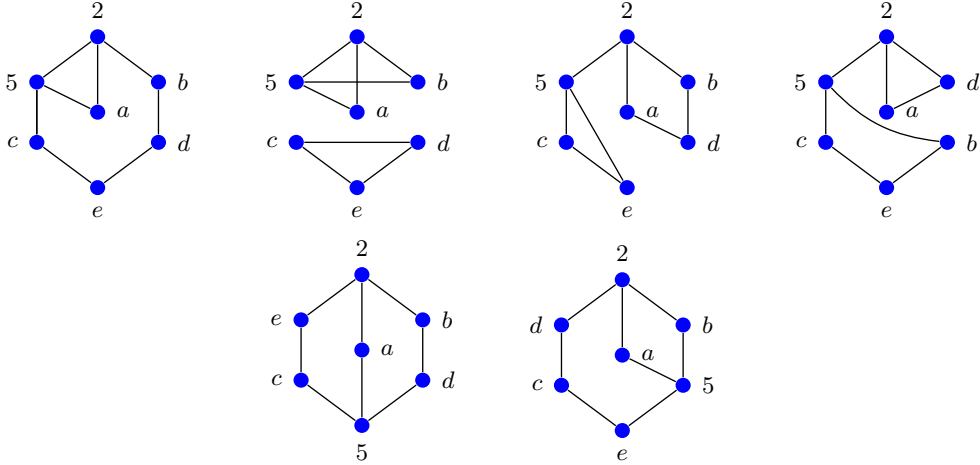
\begin{figure}
  \centering
  \caption{Possibilities for the prime graph of $G$ in Proposition~\ref{prop:11}.}\label{fig:81}
 \begin{subfigure}[t]{0.2\textwidth} 
 \centering
 \begin{tikzpicture}[scale=1]
    \node[vertex] (1) [label=above:{\scriptsize $2$}] at (0,1){};
    \node[vertex] (2) [label=left :{\scriptsize $5$}] at (-.8,0.4){};
    \node[vertex] (3) [label=right:{\scriptsize $a$}] at (0,0) {};
    \node[vertex] (4) [label=right:{\scriptsize $b$}] at (.8,0.4){};
    \node[vertex] (5) [label=left :{\scriptsize $c$}] at (-.8,-.4){};
    \node[vertex] (6) [label=right:{\scriptsize $d$}] at (0.8,-0.4){};
    \node[vertex] (7) [label=below :{\scriptsize $e$}] at (0,-1){};
    \path[edge style]
    (1) edge [] node [] {} (2)
        edge [] node [] {} (3)
        edge [] node [] {} (4)
    (2) edge [] node [] {} (3)
        edge [] node [] {} (5)
    (5) edge [] node [] {} (2)
        edge [] node [] {} (7)
    (6) edge [] node [] {} (7)
        edge [] node [] {} (4);
 \end{tikzpicture}
 \end{subfigure}%
 \quad
 \begin{subfigure}[t]{0.2\textwidth} 
 \centering
 \begin{tikzpicture}[scale=1]
    \node[vertex] (1) [label=above:{\scriptsize $2$}] at (0,1){};
    \node[vertex] (2) [label=left :{\scriptsize $5$}] at (-.8,0.4){};
    \node[vertex] (3) [label=right:{\scriptsize $a$}] at (0,0) {};
    \node[vertex] (4) [label=right:{\scriptsize $b$}] at (.8,0.4){};
    \node[vertex] (5) [label=left :{\scriptsize $c$}] at (-.8,-.4){};
    \node[vertex] (6) [label=right:{\scriptsize $d$}] at (0.8,-0.4){};
    \node[vertex] (7) [label=below :{\scriptsize $e$}] at (0,-1){};
    \path[edge style]
    (1) edge [] node [] {} (2)
        edge [] node [] {} (3)
        edge [] node [] {} (4)
    (2) edge [] node [] {} (3)
        edge [] node [] {} (4)
    (5) edge [] node [] {} (6)
        edge [] node [] {} (7)
    (6) edge [] node [] {} (7);
 \end{tikzpicture}
 \end{subfigure}
  \quad
 \begin{subfigure}[t]{0.2\textwidth} 
 \centering
 \begin{tikzpicture}[scale=1]
    \node[vertex] (1) [label=above:{\scriptsize $2$}] at (0,1){};
    \node[vertex] (2) [label=left :{\scriptsize $5$}] at (-.8,0.4){};
    \node[vertex] (3) [label=right:{\scriptsize $a$}] at (0,0) {};
    \node[vertex] (4) [label=right:{\scriptsize $b$}] at (.8,0.4){};
    \node[vertex] (5) [label=left :{\scriptsize $c$}] at (-.8,-.4){};
    \node[vertex] (6) [label=right:{\scriptsize $d$}] at (0.8,-0.4){};
    \node[vertex] (7) [label=below :{\scriptsize $e$}] at (0,-1){};
    \path[edge style]
    (1) edge [] node [] {} (2)
        edge [] node [] {} (3)
        edge [] node [] {} (4)
    (2) edge [] node [] {} (5)
        edge [] node [] {} (7)
    (5) edge [] node [] {} (7)
    (6) edge [] node [] {} (3)
        edge [] node [] {} (4);
 \end{tikzpicture}
 \end{subfigure}%
  \quad
 \begin{subfigure}[t]{0.2\textwidth} 
 \centering
 \begin{tikzpicture}[scale=1]
    \node[vertex] (1) [label=above:{\scriptsize $2$}] at (0,1){};
    \node[vertex] (2) [label=left :{\scriptsize $5$}] at (-.8,0.4){};
    \node[vertex] (3) [label=right:{\scriptsize $a$}] at (0,0) {};
    \node[vertex] (4) [label=right:{\scriptsize $d$}] at (.8,0.4){};
    \node[vertex] (5) [label=left :{\scriptsize $c$}] at (-.8,-.4){};
    \node[vertex] (6) [label=right:{\scriptsize $b$}] at (0.8,-0.4){};
    \node[vertex] (7) [label=below :{\scriptsize $e$}] at (0,-1){};
    \path[edge style]
    (1) edge [] node [] {} (2)
        edge [] node [] {} (3)
        edge [] node [] {} (4)
    (2) edge [] node [] {} (5)
        edge [bend right=20] node [] {} (6)
    (3) edge [] node [] {} (4)
    (7) edge [] node [] {} (5)
        edge [] node [] {} (6);
 \end{tikzpicture}
 \end{subfigure}
  \quad
 \begin{subfigure}[t]{0.2\textwidth} 
 \centering
 \begin{tikzpicture}[scale=1]
    \node[vertex] (1) [label=above:{\scriptsize $2$}] at (0,1){};
    \node[vertex] (2) [label=left :{\scriptsize $e$}] at (-.8,0.4){};
    \node[vertex] (3) [label=right:{\scriptsize $a$}] at (0,0) {};
    \node[vertex] (4) [label=right:{\scriptsize $b$}] at (.8,0.4){};
    \node[vertex] (5) [label=left :{\scriptsize $c$}] at (-.8,-.4){};
    \node[vertex] (6) [label=right:{\scriptsize $d$}] at (0.8,-0.4){};
    \node[vertex] (7) [label=below :{\scriptsize $5$}] at (0,-1){};
    \path[edge style]
    (1) edge [] node [] {} (2)
        edge [] node [] {} (3)
        edge [] node [] {} (4)
    (4) edge [] node [] {} (6)
    (5) edge [] node [] {} (2)
    (7) edge [] node [] {} (3)
        edge [] node [] {} (5)
        edge [] node [] {} (6);
 \end{tikzpicture}
 \end{subfigure}%
  \quad
 \begin{subfigure}[t]{0.2\textwidth} 
 \centering
 \begin{tikzpicture}[scale=1]
    \node[vertex] (1) [label=above:{\scriptsize $2$}] at (0,1){};
    \node[vertex] (2) [label=left :{\scriptsize $d$}] at (-.8,0.4){};
    \node[vertex] (3) [label=right:{\scriptsize $a$}] at (0,0) {};
    \node[vertex] (4) [label=right:{\scriptsize $b$}] at (.8,0.4){};
    \node[vertex] (5) [label=left :{\scriptsize $c$}] at (-.8,-.4){};
    \node[vertex] (6) [label=right:{\scriptsize $5$}] at (0.8,-0.4){};
    \node[vertex] (7) [label=below :{\scriptsize $e$}] at (0,-1){};
    \path[edge style]
    (1) edge [] node [] {} (2)
        edge [] node [] {} (3)
        edge [] node [] {} (4)
    (5) edge [] node [] {} (2)
        edge [] node [] {} (7)
    (6) edge [] node [] {} (3)
        edge [] node [] {} (4)
        edge [] node [] {} (7);
 \end{tikzpicture}
 \end{subfigure}%
 \end{figure}
In each case, we observe that $\Delta=\{a,b,c\}$ forms an independent set of $\Gamma(G)$, and so $\alpha(G)\geq3$. Note that $\deg(2)=3$ and $|\pi(G)|=7$. Then $\alpha(2,G)\geq2$, and so by Lemma \ref{almost simple}, there is a non-Abelian finite  simple group $S$ such that $S\leq G/K\leq \Aut(S)$, where $K$ is a maximal normal solvable subgroup of $G$. We show that $73\not \in\pi(K)$. Assume the contrary. Then $73\in\pi(K)$. Suppose $p\in\{7,13,41\}$. If $p\in\pi(K)$, then $K$ has a cyclic Hall subgroup of order $p\cdot 73$, and so $p$ is adjacent to $73$, for all $p\in \{7,13,41\}$. If $p\not \in \pi(K)$, then by Frattini argument $G=K\N_G(P)$, where $P$ is a Sylow $73$-subgroup of $K$. Hence $\N_G(P)$ contains an element $x$ of order $p$. Now $P\langle x \rangle$ is a cyclic subgroup of order $p\cdot 73$. This implies that $p$ and $73$ are adjacent. Therefore, $73$ is adjacent to $p$, for all $p\in\{7,13,41\}$, which is a contradiction as the degree of $73$ is $2$. Therefore, $73\notin\pi(K)$. We now prove that $S\cong L_3(81)$. By Lemma \ref{out}, we must have $73\notin\pi(\Out(S))$, and so $73\notin\pi(K)\cup\pi(\Out(S))$. This implies that $73\in\pi(S)$. Now by \cite[Table 1]{ZAVARNITSIN}, $S$ is isomorphic to one of the groups in Table \ref{tbl:81} below. If $S$ is isomorphic to one of the groups $U_3(9)$, $L_2(3^6)$ and $G_2(9)$, then $41\in\pi(K)$, which is a contradiction. Therefore $S\cong L_3(81)$,  and since $L_3(81)\leq G/K\leq \Aut(L_3(81))$ and $|G|=|L_3(81)|$, it follows that $|K|=1$, and hence $G$ is isomorphic to $L_3(81)$.
\end{proof}

\begin{table}[h]
 \centering
 \caption{Non-Abelian finite simple groups $S$ whose order divides $|L_3(81)|$.}\label{tbl:81}
 \small
\begin{tabular}{llc}
  \multicolumn{1}{l}{$S$} & \multicolumn{1}{l}{$|S|$} & \multicolumn{1}{c}{$|\Out(S)|$} \\ \hline
  $U_3(9)$ & $2^5\cdot 3^6\cdot 5^2\cdot 73$ & $2$\\
  $L_2(3^6)$ & $2^3\cdot 3^6\cdot 5\cdot 7\cdot 13\cdot 73$ & $9$\\
  $G_2(9)$ & $2^8\cdot 3^{12}\cdot 5^2\cdot 7\cdot 13\cdot 73$ & $6$\\
  $L_3(81)$ & $2^9\cdot 3^{12}\cdot 5^2\cdot 7\cdot 13\cdot 41\cdot 73$ & $8$\\
\end{tabular}
\end{table}

\begin{proposition}\label{prop:12}
If $|G| =|L_{3}(83)|$ and $\D(G)=\D(L_3(83))$, then $G\cong L_3(83)$.
\end{proposition}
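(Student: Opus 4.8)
The plan is to run the same argument used in Propositions~\ref{prop:1}--\ref{prop:11}. From Table~\ref{tbl:simple} one reads $|G|=2^4\cdot 3\cdot 7\cdot 19\cdot 41^2\cdot 83^3\cdot 367$ and $\D(G)=(4,3,3,1,4,2,1)$, so $\pi(G)=\{2,3,7,19,41,83,367\}$ has size $7$, the primes $3$ and $7$ each have degree $3$, and the primes $19$ and $367$ each have degree $1$. Hence Lemma~\ref{lem:alph3}(b) gives $\alpha(G)\geq 3$; moreover $\deg_G(2)=4$ and $|\pi(G)|=7$ force at least two primes to be non-adjacent to $2$, so $\alpha(2,G)\geq 2$. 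Lemma~\ref{almost simple} then yields a non-abelian finite simple group $S$ with $S\leq G/K\leq \Aut(S)$, where $K$ is the maximal normal solvable subgroup of $G$.

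The next step is to show $367\notin\pi(K)$. Assume $367\in\pi(K)$ and let $(p,a)\in\{(7,1),(41,1),(41,2)\}$. If $p\in\pi(K)$, solvability of $K$ provides a Hall $\{p,367\}$-subgroup of order $p^{a}\cdot 367$; since its Sylow $367$-subgroup is cyclic and normal, it contains an element of order $p\cdot 367$, so $p\sim 367$. If $p\notin\pi(K)$, the Frattini argument gives $G=K\N_G(P)$ for $P$ a Sylow $367$-subgroup of $K$, so $\N_G(P)$ contains an element $x$ of order $p$; as $|P|=367$ and $p\nmid 366=|\Aut(\Zbb_{367})|$, $x$ centralizes $P$ and $P\langle x\rangle$ is cyclic of order $p\cdot 367$, again $p\sim 367$. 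Thus both $7\sim 367$ and $41\sim 367$, so $\deg_G(367)\geq 2$, contradicting $\deg_G(367)=1$. Hence $367\notin\pi(K)$ and $\pi(K)\subseteq\{2,3,7,19,41,83\}$.

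Since $367\mid |G/K|$ and $|G/K|\mid |\Aut(S)|=|S|\cdot|\Out(S)|$, while $|S|$ divides the comparatively small number $|G|$ (so $|\Out(S)|$ cannot be divisible by $367$), we get $367\in\pi(S)$, and Lemma~\ref{out} confirms $367\nmid|\Out(S)|$. By \cite[Table 1]{ZAVARNITSIN} this forces $S\cong L_3(83)$; any other candidate with $367$ exactly dividing its order (such as $L_2(367)$, whose order involves the primes $23$ and $61$) is eliminated because its order is divisible by a prime outside $\pi(G)$ or exceeds $|G|$, exactly as in the treatment of the auxiliary tables in Propositions~\ref{prop:10}--\ref{prop:11}. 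Finally, $L_3(83)\leq G/K\leq \Aut(L_3(83))$ together with $|L_3(83)|=|G|$ gives $|K|=1$, and then $G\cong L_3(83)$.

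I expect the only non-routine point to be the identification of $S$: it relies on having the precise list from \cite[Table 1]{ZAVARNITSIN} of simple groups whose order is divisible by $367$, and on checking, by comparing orders and prime sets, that $L_3(83)$ is the sole possibility compatible with $|G|$ and with $\pi(K)\subseteq\{2,3,7,19,41,83\}$. A minor technical detail, already addressed above, is ensuring that $P\langle x\rangle$ is actually cyclic, which is why the auxiliary primes $7$ and $41$ are chosen coprime to $366$ rather than, say, $2$ or $3$.
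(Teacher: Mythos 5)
Your proposal is correct and follows essentially the same route as the paper: Lemma~\ref{lem:alph3} and Lemma~\ref{almost simple} to get the almost simple quotient $G/K$, a Hall-subgroup/Frattini argument to exclude $367$ from $\pi(K)$ by forcing $\deg_G(367)\geq 2$, then Lemma~\ref{out} and \cite[Table 1]{ZAVARNITSIN} to identify $S\cong L_3(83)$ and conclude $K=1$. The only differences are cosmetic: you use the auxiliary primes $7$ and $41$ where the paper uses $7$ and $19$, and you spell out the cyclicity of $P\langle x\rangle$ via $p\nmid 366$, which the paper leaves implicit.
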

\begin{proof}
According to Table ~\ref{tbl:simple}, $|G|=2^4\cdot 3\cdot 7\cdot 19\cdot 41^2\cdot 83^3\cdot 367$ and $\D(G)=(4,3,3,1,4,2,1)$. By Lemma~\ref{lem:alph3}, we have that  $\alpha(G)\geq3$. Furthermore, $\alpha(2,G)\geq2$ as $\deg(2)=4$ and $|\pi(G)|=7$. By Lemma \ref{almost simple}, there is a non-Abelian finite  simple group $S$ such that $S\leq G/K\leq \Aut(S)$, where $K$ is a maximal normal solvable subgroup of $G$. We show that $\pi(K)$ does not contain $367$. Assume the contrary. Then $367 \in \pi(K)$. Suppose $p\in\{7,19\}$. If $p\in\pi(K)$, then $K$ contains a cyclic Hall subgroup of order $p\cdot 367$, and so $p$ is adjacent to $367$. If $p\not \in \pi(K)$, then by Frattini argument $G=K\N_G(P)$, where $P$ is a Sylow $367$-subgroup of $K$, and so $\N_G(P)$ has an element $x$ of order $p$. Note that $P\langle x \rangle$ is a cyclic subgroup of $G$ of order $p\cdot 367$. Then $p$ is adjacent to $367$. Therefore, both $7$ and $19$ and $367$ are adjacent in $\Gamma(G)$, which is a contradiction. We prove that $S\cong L_3(83)$. By Lemma \ref{out}, $367\notin\pi(\Out(S))$, then $367\notin\pi(K)\cup\pi(\Out(S))$, and so $367\in\pi(S)$. Therefore by \cite[Table 1]{ZAVARNITSIN}, $S$ is isomorphic to $L_3(83)$, and so $L_3(83)\leq G/K\leq \Aut(L_3(83))$. Moreover, since $|G|=|L_3(83)|$, it follows that $|K|=1$, and hence $G$ is isomorphic to $L_3(83)$.
\end{proof}

\noindent \textbf{Proof of Theorem~\ref{thm:main}.} \ The proof of  Theorem \ref{thm:main} follows immediately from \cite{Darafsheh,Rezaeezadeh,Rezaeezadeh1} and Propositions~\ref{prop:1}-\ref{prop:12}.




\end{document}